\newtheorem{thm}{Theorem}[section]
\newtheorem{cor}[thm]{Corollary}
\newtheorem{lem}[thm]{Lemma}
\newtheorem{prop}[thm]{Proposition}
\newtheorem{defn}[thm]{Definition}
\numberwithin{equation}{section}
\def\A{\mathscr A}
\def\Ad{\mathscr A^\bullet}
\def\D{{\rm dist}}
\def\R{\mathcal{R}}
\begin{document}

\title{Perturbation analysis for the generalized inverses with prescribed idempotents in Banach algebras}

\author{Jianbing Cao\thanks{{\bf Email}: caocjb@163.com} \\
Department of Mathematics, East China Normal University,\\
Shanghai 200241, P.R. China\\
Department of mathematics, Henan Institute of Science and Technology\\
Xinxiang, Henan, 453003, P.R. China\\
\and
Yifeng Xue\thanks{{\bf Email}: yfxue@math.ecnu.edu.cn; Corresponding author}\\
Department of Mathematics, East China Normal University,\\
Shanghai 200241, P.R. China }
\date{}

\maketitle

\begin{abstract}
In this paper, we first study the perturbations and expressions for the generalized inverses $a^{(2)}_{p,q}$,
$a^{(1, 2)}_{p,q}$, $a^{(2, l)}_{p,q}$ and $a^{(l)}_{p,q}$ with prescribed idempotents  $p$ and $q$.
Then, we investigate the general perturbation analysis and error estimate for some of these generalized
 inverses when $p,\,q$ and $a$ also have some small perturbations.
\vspace{3mm}

 \noindent{2010 {\it Mathematics Subject Classification\/}: 15A09; 46L05}

 \noindent{{\it Key words}: gap function, idempotent element, generalized inverse, perturbation}

\end{abstract}


\section{Introduction}

Let $\R$ be a unital ring and let $\R^\bullet$ denote the set of all idempotent elements in $\R$. Given
$p,\,q \in \R^\bullet$. Recall that an element $a\in\R$ has the $(p, q)$--outer generalized inverse
$b=a^{(2)}_{p,q}\in\R$ if $bab = b$, $ba = p$ and $1-ab = q$.
If $b=a^{(2)}_{p,q}$ also satisfies the equation $aba=a$, then we say $a$ has the $(p, q)$--generalized inverse $b$,
in this case, written $b=a^{(1,2)}_{p,q}$. If an outer generalized inverse with prescribed idempotents exists, it is
necessarily unique (cf. \cite{DW1}). According to this definition, obviously, we see that the Moore--Penrose inverses in a
$C^*$-algebra and (generalized) Drazin inverses in a Banach algebra can be expressed by some $(p, q)$--outer generalized
inverses (cf. \cite{DW1,CLZ1,CX1}).

Based on some results of Djordjevi\'c and Wei in \cite{DW1}, Ilic, Liu and Zhong gave some equivalent conditions for the
existence of the $(p, q)$--outer generalized inverse in a Banach algebra in \cite{CLZ1}. But in our recent paper
\cite{CX1}, we find that Theorem 1.4 of \cite{CLZ1} is wrong. In \cite{CX1}, we first present a counter--example to
\cite[Theorem 1.4]{CLZ1}, then based on our counter--example, we define a new type of generalized inverse with prescribed
idempotents in a Banach algebra as follows:
\begin{defn}[see \cite{CX1}]\label{mdef1.2}
Let $a \in \mathscr{A}$ and $p,\; q \in \mathscr{A}^\bullet$. An element $b \in\mathscr{A}$ satisfying
$$
bab = b, \quad R_r(b)= R_r(p), \quad K_r(b) = R_r(q),
$$
will be called the $(p, q, l)$--outer generalized inverse of $a$, written as $a^{(2,l)}_{p,q} = b.$

In addition, if $a^{(2,l)}_{p,q}$ satisfies $a=aa^{(2,l)}_{p,q}a$, we call $a^{(2,l)}_{p,q}$ is the $(p,q,l)$--generalized inverse of $a$, denoted by $a^{(l)}_{p,q}$.
\end{defn}

Perturbation analysis of the generalized inverses is very important in both theory and applications. In recent years,
there are many fruitful results concerning the perturbation analysis for various types generalized inverses of operators
on Hilbert spaces or Banach spaces. The concept of stable perturbation of an operator on Hilbert spaces and Banach spaces
is introduced by Chen and Xue in \cite{CX1}. Later the notation is generalized to the set of Banach algebras
by the second author in \cite{Xue2} and to the set of Hilbert $C^*$--modules by Xu, Wei and Gu in \cite{XWG}.
Using the notation ``stable perturbation", many important results in perturbation analyses for Moore--Penrose inverses on
Hilbert spaces and Drazin inverses on Banach spaces or in Banach algebras have been obtained. Please see
\cite{CWX1,C-X1,C-X2,Xue1,Xue2,XC2} for detail.

Let $X, Y$ be Banach spaces over complex field $\mathbb C$. Let $T$ (resp. $S)$ be a given closed space in $X$ (resp,
$Y$). Let $A$ be a bounded linear operator from $X$ to $Y$ such that $A_{T,S}^{(2)}$ exists. The perturbation analysis
of $A_{T,S}^{(2)}$ for small perturbation of $T$, $S$ and $A$ has been done in \cite{DX2,DX0}.
Motivated by some recent results concerning the perturbation analysis for the generalized inverses of operators,
in this paper, we mainly study the perturbations and expressions for various types of generalized inverses with prescribed
idempotents in Banach algebras. We first consider the stable perturbation characterizations for $a^{(2)}_{p,q}$,
$a^{(1, 2)}_{p,q}$, $a^{(2, l)}_{p,q}$ and $a^{(l)}_{p,q}$ with prescribed idempotents $p$ and $q$. Then, by using stable
perturbation characterizations, we can investigate the general
perturbation analysis and error estimate for some of these generalized inverses when $p,\,q$ and $a$ also have some
small perturbations. The results obtained in this paper extend and improve many recent results in this area.

\section{\bf Preliminaries}
In this section, we give some notations in this paper, we also list some preliminary results which will be frequently
used in our main sections. Throughout the paper, $\mathscr{A}$ is always a complex Banach algebra with the unit 1.

Let $a\in\A$. If there is $b\in\A$ such that $aba=a$ and $bab=b$, then $a$ is called to be generalized invertible and
$b$ is called the generalized inverse of $a$, denoted by $b=a^+$. Let $Gi(\A)$ denote the set of all generalized invertible
elements in $\A\backslash\{0\}$. Let $\A^\bullet$ denote the set of all idempotent elements in $\A $. If $a\in Gi(\A )$,
then $a^+a$ and $1-aa^+$ are all idempotent elements. For $a\in \A $, set
\begin{alignat*}{2}
K_r(a)& = \{x \in \A  \;|\; ax = 0 \},&\quad R_r(a) &= \{ax \;|\; x \in \A \};\\
K_l(a)& = \{x \in \A  \;|\; xa = 0 \},&\quad R_l(a) &= \{xa \;|\; x \in \A \}.
\end{alignat*}
Clearly, if $p \in \A ^\bullet$, then $\A $ has the direct sum decompositions:
$$\A =K_r(p)\dotplus R_r(p) \quad or \quad  \A =K_l(p)\dotplus R_l(p).$$

The following useful and well--known lemma can be easily proved.
\begin{lem}\label{mlem1.1}
Let $x\in \A $ and $p\in\A ^\bullet$. Then
\begin{enumerate*}
\item[\rm(1)]  $K_r(p)$ and $R_r(p)$ are all closed and $K_r(p) = R_r(1-p),\ R_r(p)\mathscr{A} \subset R_r(p);$
\item[\rm(2)]  $px = x$ if and only if $R_r(x) \subset R_r(p)$ or $K_l(p) \subset K_l(x);$
\item[\rm(3)] $xp = x$ if and only if $K_r(p) \subset K_r(x)$ or $R_l(x)\subset R_l(p).$
\end{enumerate*}
\end{lem}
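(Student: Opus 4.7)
The plan is to prove each item by a direct manipulation that uses only the idempotent identity $p^2=p$ together with continuity of the left- and right-multiplication maps on $\A$.

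For part (1), I would first observe that $K_r(p)$ is the kernel of the bounded linear map $L_p:\A\to\A$, $y\mapsto py$, and is therefore closed. To handle $R_r(p)=p\A$, the key auxiliary fact to establish is the characterization
\[
y\in p\A \iff py=y,
\]
where $\Leftarrow$ is trivial and $\Rightarrow$ follows from $p(px)=p^2x=px$. This rewrites $R_r(p)=\ker(I-L_p)$, which is again closed. The equality $K_r(p)=R_r(1-p)$ then drops out from $px=0\iff x=(1-p)x$, and the inclusion $R_r(p)\A\subset R_r(p)$ is immediate from $(px)y=p(xy)\in p\A$.

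For part (2), I would prove the two equivalences separately. For $px=x\iff R_r(x)\subset R_r(p)$: assuming $px=x$, every $xy=(px)y=p(xy)$ lies in $R_r(p)$, giving $R_r(x)\subset R_r(p)$; conversely, if $R_r(x)\subset R_r(p)$ then $x=x\cdot 1\in R_r(x)\subset R_r(p)$, so $x=pz$ for some $z$, and hence $px=p^2z=pz=x$. For $px=x\iff K_l(p)\subset K_l(x)$: assuming $px=x$, any $y$ with $yp=0$ satisfies $yx=y(px)=(yp)x=0$; for the converse, apply the containment to $1-p$, which belongs to $K_l(p)$ since $(1-p)p=0$, to conclude $(1-p)x=0$, i.e., $px=x$.

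Part (3) follows by the dual argument, replacing left multiplication with right multiplication throughout and using the symmetric characterization $y\in\A p\iff yp=y$. The main obstacle is not really an obstacle at all; the only genuine observation the proof depends on is the characterization $y\in p\A\iff py=y$ (together with its left-sided analogue), which simultaneously delivers the closedness of $R_r(p)$ and $R_l(p)$ and bridges the "range contained in" and "acts as identity" viewpoints needed for (2) and (3).
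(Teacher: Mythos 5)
Your proof is correct and complete; the paper itself omits the argument entirely (it only remarks that the lemma ``can be easily proved''), and your elementary derivation from $p^2=p$ --- in particular the key characterization $y\in p\mathscr{A}\iff py=y$, which gives closedness of $R_r(p)$ and drives both directions of (2) and (3) --- is exactly the standard argument the authors have in mind.
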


We list some of the necessary and sufficient conditions for the existence of $a^{(2,l)}_{p,q}$ in the following lemma,
which will be frequently used in the paper. Here we should indicate that $a^{(2,l)}_{p,q}$ is unique if it exists. Please see \cite{CX1} for the proofs and more information.

\begin{lem}\label{mlem1.3}
Let $a \in \mathscr{A}$ and $p, q \in \mathscr{A}^\bullet$. Then the following statements are equivalent:
\begin{enumerate*}
\item [\rm(1)] $a^{(2,l)}_{p,\,q}$ exists;
\item [\rm(2)] There exists $b \in \A $ such that $bab = b$, $R_r(b)= R_r(p)$ and $K_r(b) = R_r(q)$;
\item [\rm(3)] $K_r(a)\cap R_r(p)=\{0\}$ and $\A =aR_r(p)\dotplus R_r(q)$;
\item [\rm(4)] There exists $b \in \A $ satisfying $b = pb$, $p = bap$, $b(1- q) = b$, $1 -q = (1- q)ab$;
\item [\rm(5)] $p\in R_l((1-q)ap)=\{x(1-q)ap \;|\; x\in \mathscr{A}\}$ and $1-q \in R_r((1-q)ap)$;
\item [\rm(6)] There exist some $s, t \in \mathscr{A}$ such that $p=t(1-q)ap$, $1-q=(1-q)aps$.
\end{enumerate*}
\end{lem}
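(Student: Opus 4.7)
The overall strategy is to establish a cycle of implications, taking advantage of the fact that $(1)\Leftrightarrow(2)$ is immediate from Definition \ref{mdef1.2} and that $(5)\Leftrightarrow(6)$ merely unpacks the notation $R_l((1-q)ap)=\mathscr{A}(1-q)ap$ and $R_r((1-q)ap)=(1-q)ap\mathscr{A}$. I would split the remaining work as $(2)\Leftrightarrow(3)$, then $(2)\Rightarrow(4)\Rightarrow(6)$, and finally $(6)\Rightarrow(3)$ to close the chain.

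For $(2)\Rightarrow(3)$, the relation $bab=b$ makes $ba$ act as the identity on $R_r(b)=R_r(p)$, from which $K_r(a)\cap R_r(p)=\{0\}$ is immediate; moreover every $y\in\mathscr{A}$ decomposes as $y=a(by)+(1-ab)y$ with $by\in R_r(p)$ and $(1-ab)y\in K_r(b)=R_r(q)$, and directness follows by applying $b$ to a relation $az+w=0$ with $z\in R_r(b)$. For $(3)\Rightarrow(2)$, I extract the unique expansion $1=az_0+w_0$ with $z_0\in R_r(p)=p\mathscr{A}$ and $w_0\in R_r(q)=q\mathscr{A}$, and set $b:=z_0$. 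Right-multiplying by arbitrary $y$ shows that $y=a(by)+w_0 y$ is itself a valid (hence the unique) decomposition of $y$, since $p\mathscr{A}\cdot\mathscr{A}\subset p\mathscr{A}$ and $q\mathscr{A}\cdot\mathscr{A}\subset q\mathscr{A}$ by Lemma \ref{mlem1.1}. This yields $b=pb$, $bap=p$ (apply uniqueness to $ap=a\cdot p+0$), $bq=0$, and $bw_0=0$; these identities combine to give $bab=b$, $R_r(b)=R_r(p)$ and $K_r(b)=R_r(q)$.

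For $(2)\Rightarrow(4)$, Lemma \ref{mlem1.1} and the hypotheses of (2) deliver each requirement: $R_r(b)=R_r(p)$ gives $b=pb$ and $p\in R_r(b)$, and combining with $bab=b$ via $p=bz\Rightarrow bap=bab z=bz=p$ yields $bap=p$; $K_r(b)=R_r(q)$ gives $bq=0$, hence $b(1-q)=b$; and $b-bab=0$ places $1-ab$ in $K_r(b)=q\mathscr{A}$, so $q(1-ab)=1-ab$, which rearranges to $(1-q)ab=1-q$. For $(4)\Rightarrow(6)$ I simply take $t=s=b$: from $b=b(1-q)$, $p=bap=b(1-q)ap$; from $b=pb$, $1-q=(1-q)ab=(1-q)apb$.

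For $(6)\Rightarrow(3)$, any $x\in K_r(a)\cap R_r(p)$ satisfies $x=px=t(1-q)apx=t(1-q)ax=0$. For the direct sum decomposition, the key identity is
\[
q(1-aps)=q-qaps=q-\bigl(aps-(1-q)\bigr)=1-aps,
\]
which follows from $1-q=(1-q)aps$ and places $1-aps\in R_r(q)$; hence every $y$ admits $y=a(psy)+(1-aps)y\in aR_r(p)+R_r(q)$. Directness is obtained by multiplying $az+w=0$ on the left by $(1-q)$ to kill $w$ (using $w=qw$ and $(1-q)q=0$), getting $(1-q)az=0$, and then applying $t$ together with $p=t(1-q)ap$ and $z=pz$ to conclude $z=0$, whence also $w=0$. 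I expect this last implication, and specifically the coverage step leaning on the small identity $q(1-aps)=1-aps$, to be the main obstacle, since every other step is routine idempotent-algebra manipulation once a workable decomposition is in hand.
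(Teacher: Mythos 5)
The paper does not actually prove this lemma: it is quoted from the authors' companion paper \cite{CX1} (``Please see \cite{CX1} for the proofs and more information''), so there is no in-paper argument to compare yours against. Judged on its own, your proof is correct and self-contained. The implication cycle $(2)\Rightarrow(4)\Rightarrow(6)\Rightarrow(3)\Rightarrow(2)$, together with the trivial equivalences $(1)\Leftrightarrow(2)$ and $(5)\Leftrightarrow(6)$, does cover all six statements, and each step checks out: in $(2)\Rightarrow(3)$ the identity $ba\,x=x$ for $x\in R_r(b)$ and the decomposition $y=a(by)+(1-ab)y$ work exactly as you say; in $(6)\Rightarrow(3)$ your key identity $q(1-aps)=1-aps$ is a correct consequence of $1-q=(1-q)aps$ and does yield $1-aps\in R_r(q)$, and the directness argument via left multiplication by $1-q$ followed by $z=pz=t(1-q)apz=t(1-q)az=0$ is sound. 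The only place you compress is in $(3)\Rightarrow(2)$: from the uniqueness of the decomposition of $ap$ you get $a(bap)=ap$ and then still need $K_r(a)\cap R_r(p)=\{0\}$ to pass from $a(bap-p)=0$ to $bap=p$ (and similarly $a(bq)=0$ gives $bq=0$ only via the same trivial-intersection hypothesis); this is the same argument you already used to make $z_0$ unique, and the hypothesis is part of (3), so it is a presentational shortcut rather than a gap, but it is worth spelling out. One stylistic note: your construction of $b$ as the $R_r(p)$-component preimage in $1=az_0+w_0$ is a pleasant way to organize $(3)\Rightarrow(2)$; the companion paper's criteria (4)--(6) suggest the intended route is close to yours, so nothing essential is lost by not having seen it.
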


The following lemma gives some equivalent conditions about the existence of $a^{(l)}_{p,\,q}$. See \cite{CX1}
for more information.

\begin{lem}\label{mlem1.4}
Let $a \in \mathscr{A}$ and $p, q \in \A^\bullet$. Then the following conditions are equivalent:
\begin{enumerate*}
\item [\rm{(1)}] $a^{(l)}_{p,\,q}$ exists, i.e.,there exists some $b \in \mathscr{A}$ such that
  $$
  aba = a, \quad bab =b,\quad R_r(b) = R_r(p),\quad K_r(b) = R_r(q),
  $$
\item [\rm{(2)}] $\mathscr{A}=R_r(a)\dotplus R_r(q)= K_r(a)\dotplus R_r(p),$
\item [\rm{(3)}] $\mathscr{A}=aR_r(p)\dotplus R_r(q),\; R_r(a)\cap R_r(q)=\{0\}, \; K_r(a)\cap R_r(p)=\{0\}.$
\end{enumerate*}
\end{lem}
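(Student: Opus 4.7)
The plan is to prove the chain $(1)\Rightarrow(2)\Rightarrow(3)\Rightarrow(1)$, leaning on Lemma~\ref{mlem1.3} to dispose of the outer-generalized-inverse conditions and viewing the equation $aba=a$ as exactly the upgrade that corresponds to the extra condition $R_r(a)\cap R_r(q)=\{0\}$ present in (3).

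For $(1)\Rightarrow(2)$, the element $b=a^{(l)}_{p,q}$ is in particular an $a^{(2,l)}_{p,q}$, so Lemma~\ref{mlem1.3} applies and supplies the identities $pb=b$, $bp=p$ (from $R_r(b)=R_r(p)$ via Lemma~\ref{mlem1.1}), and $bq=0$ (from $b(1-q)=b$ in Lemma~\ref{mlem1.3}(4)). To establish $\A=K_r(a)\dotplus R_r(p)$, I would use the explicit splitting $x=(x-bax)+bax$: the first summand lies in $K_r(a)$ because $a(x-bax)=ax-(aba)x=0$, while $p(bax)=(pb)ax=bax$ places the second in $R_r(p)$. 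Directness follows because any $x\in K_r(a)\cap R_r(p)$ satisfies $x=px$, hence $x=(bp)x=bx=(bab)x=b(a(bx))=b(ax)=0$. For $\A=R_r(a)\dotplus R_r(q)$, the sum is immediate from $aR_r(p)\subset R_r(a)$ together with $\A=aR_r(p)\dotplus R_r(q)$ of Lemma~\ref{mlem1.3}(3); for the intersection, $x=ay=qz$ yields $bx=(bq)z=0$, and then $x=ay=(aba)y=a(bay)=a(bx)=0$.

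For $(2)\Rightarrow(3)$, the two intersection conditions are read off immediately. To establish $\A=aR_r(p)\dotplus R_r(q)$, I write any $y\in\A$ as $au+qv$ via the first direct sum in (2), then split $u=u_1+u_2$ with $u_1\in K_r(a)$, $u_2\in R_r(p)$ via the second direct sum; this yields $au=au_2\in aR_r(p)$, so $y\in aR_r(p)+R_r(q)$. Directness is inherited from $aR_r(p)\cap R_r(q)\subset R_r(a)\cap R_r(q)=\{0\}$.

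For $(3)\Rightarrow(1)$, Lemma~\ref{mlem1.3}(3) directly produces the outer generalized inverse $b=a^{(2,l)}_{p,q}$, so only $aba=a$ remains. The key observation is that $(1-ab)a\in R_r(a)$ trivially, while $b(1-ab)a=(b-bab)a=0$ places it in $K_r(b)=R_r(q)$; the new hypothesis $R_r(a)\cap R_r(q)=\{0\}$ then forces $(1-ab)a=0$. The main obstacle throughout is the bookkeeping of the algebraic identities coming from Lemma~\ref{mlem1.3}(4) and keeping track of which factor in each product absorbs which identity; conceptually, however, the argument is clean because among the three conditions in (3) the two outer ones rebuild $a^{(2,l)}_{p,q}$ through Lemma~\ref{mlem1.3} and the third, $R_r(a)\cap R_r(q)=\{0\}$, is precisely what upgrades $bab=b$ to $aba=a$.
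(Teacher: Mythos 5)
Your overall route is fine, and in fact the paper offers no proof of this lemma to compare against (it is quoted from \cite{CX1}); the cycle $(1)\Rightarrow(2)\Rightarrow(3)\Rightarrow(1)$, with Lemma \ref{mlem1.3} supplying the outer-inverse part and the observation that $(1-ab)a\in R_r(a)\cap K_r(b)=R_r(a)\cap R_r(q)=\{0\}$ upgrading $b$ to an inner inverse, works. Steps $(2)\Rightarrow(3)$ and $(3)\Rightarrow(1)$ are correct as written, as are the identities $pb=b$ and $bq=0$ and the two splittings $x=(x-bax)+bax$ and $u=u_1+u_2$.

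The one genuine flaw is in $(1)\Rightarrow(2)$: the identity $bp=p$ does \emph{not} follow from $R_r(b)=R_r(p)$ via Lemma \ref{mlem1.1}, because parts (2)--(3) of that lemma only let an \emph{idempotent} absorb, and $b$ need not be idempotent. It is in fact false in general: in $M_2(\mathbb C)$ take $a=E_{11}$, let $b$ be the matrix with every entry $1$ (then $aba=a$, $bab=b$) and let $p$ be the idempotent with every entry $1/2$; then $R_r(b)=R_r(p)$ (both consist of the matrices with equal rows) but $bp=b\neq p$. The correct absorption identity is $bap=p$, since $ba$ is an idempotent with $R_r(ba)=R_r(b)=R_r(p)$ (this is exactly Lemma \ref{mlem1.3}(4)). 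Your directness chain should therefore be repaired to: for $x\in K_r(a)\cap R_r(p)$, $x=px=(bap)x=ba(px)=b(ax)=0$, or equivalently write $x=bt$ and use $x=babt=b(ax)=0$. With that one-line fix the argument is complete; note also that $K_r(a)\cap R_r(p)=\{0\}$ and $\A=aR_r(p)\dotplus R_r(q)$ could simply be quoted from Lemma \ref{mlem1.3}(3), since your $b$ is in particular $a^{(2,l)}_{p,q}$.
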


Let $X$ be a complex Banach space. Let $M,\,N$ be two closed subspaces in $X$. Set
$$
\delta(M,N)=\begin{cases}\sup\{\D(x,N)\,\vert\,x\in M,\,\|x\|=1\},\quad &M\not=\{0\}\\ 0 \quad& M=\{0\}
\end{cases},
$$
where $\D(x,N)=\inf\{\|x-y\|\,\vert\,y\in N\}$. The gap $\hat\delta(M,N)$ of $M,\,N$ is given by
$\hat{\delta}(M,N)=\max\{\delta(M,N),\delta(N,M)\}$. For convenience, we list some properties about $\delta(M,N)$
and $\hat\delta(M,N)$ which come from \cite{TK} as follows.

\begin{prop}[\text{\cite{TK}}]\label{2P1}
Let $M,\,N$ be closed subspaces in a Banach space $X$. Then
\begin{enumerate*}
  \item[$(1)$] $\delta(M,N)=0$ if and only if $M\subset N$;
  \item[$(2)$] $\hat{\delta}(M,N)=0$ if and only if $M=N$;
  \item[$(3)$] $\hat{\delta}(M,N)=\hat{\delta}(N,M)$;
  \item[$(4)$] $0\leq \delta(M,N)\leq 1$, $0\leq \hat{\delta}(M,N)\leq 1$.
\end{enumerate*}
\end{prop}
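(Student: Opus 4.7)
The plan is to prove the four items in order, since the later ones reduce to the earlier ones. The main device throughout is that $N$ is closed (so $\mathrm{dist}(x,N)=0$ forces $x\in N$) and that $0\in N$ (so $\mathrm{dist}(x,N)\le\|x\|$). Nothing deep happens; the challenge is just to keep the edge case $M=\{0\}$, baked into the piecewise definition of $\delta$, from corrupting each argument.

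For (1), I would split on whether $M=\{0\}$. In that case both sides hold by convention. Otherwise, if $M\subset N$, then for any unit vector $x\in M$ we have $x\in N$, hence $\mathrm{dist}(x,N)=0$, whence $\delta(M,N)=0$. Conversely, assume $\delta(M,N)=0$ with $M\neq\{0\}$. For any nonzero $x\in M$, the unit vector $x/\|x\|$ lies in $M$, so $\mathrm{dist}(x/\|x\|,N)=0$; scaling gives $\mathrm{dist}(x,N)=0$, and closedness of $N$ yields $x\in N$. For (2), combine (1) applied to the pair $(M,N)$ and the pair $(N,M)$: $\hat\delta(M,N)=0$ iff $\delta(M,N)=\delta(N,M)=0$ iff $M\subset N$ and $N\subset M$.

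Item (3) is immediate from $\hat\delta(M,N)=\max\{\delta(M,N),\delta(N,M)\}$, which is symmetric in $M,N$. For (4), nonnegativity is immediate since $\mathrm{dist}(\cdot,N)\ge 0$. For the upper bound, note $0\in N$, so for every unit $x\in M$,
\[
\mathrm{dist}(x,N)\le \|x-0\|=1,
\]
hence $\delta(M,N)\le 1$; the bound $\hat\delta(M,N)\le 1$ follows by taking the maximum of two quantities in $[0,1]$.

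Since the statement is quoted directly from Kato's monograph, I do not expect any genuine obstacle; the only care needed is to handle the $M=\{0\}$ convention uniformly across the four parts (e.g.\ in (2), both $M=\{0\}=N$ and $M=N\neq\{0\}$ must be checked against the definition), and to invoke closedness of $N$ at exactly one place in (1). Everything else is a one-line consequence of the definition.
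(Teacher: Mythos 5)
Your proof is correct: each item follows exactly as you argue, with the closedness of $N$ invoked only where it is needed (the ``only if'' direction of (1)), the homogeneity $\mathrm{dist}(\lambda x,N)=|\lambda|\,\mathrm{dist}(x,N)$ justifying the scaling step, and the $M=\{0\}$ convention handled consistently in (1), (2) and (4). The paper itself states this proposition without proof, citing Kato's monograph, so there is no internal argument to compare against; your elementary verification is the standard one and is sound.
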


\section{Stable perturbations for the (p, q)--generalized inverses}

Let $a\in Gi(\A)$ and let $\bar a=a+\delta a\in\A$. Recall from \cite{Xue1} that $\bar a$ is a stable perturbation of
$a$ if $R_r(\bar a)\cap K_r(a^+)=\{0\}$. Obviously, we can define the stable perturbation for various kind of generalized
inverses. In this section, we concern the stable perturbation problem for various types of $(p,q)$--generalized inverses
in a Banach algebra.
\begin{lem}[{\rm\cite[Lemma 2.2]{DX3}}]\label{mlem2.1}
Let $a,\,b \in \A $. If $1+ab$ is left invertible, then so is $1+ba$.
\end{lem}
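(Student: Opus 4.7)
The plan is to prove this by an explicit Jacobson-style construction: guess a formula for a left inverse of $1+ba$ in terms of a given left inverse of $1+ab$, and then verify it by a short direct computation. This is the classical trick behind the identity $(1+ba)^{-1}=1-b(1+ab)^{-1}a$, which works just as well one-sided.

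First I would fix a left inverse $c\in\A$ of $1+ab$, i.e.\ an element with $c(1+ab)=1$, equivalently $c+cab=1$, equivalently $cab=1-c$. The natural candidate for a left inverse of $1+ba$ is
\[
d = 1 - bca.
\]
I would then compute
\[
d(1+ba) = (1-bca)(1+ba) = 1 + ba - bca - bcaba = 1 + ba - bc(1+ab)a.
\]
Using $c(1+ab)=1$, the last term collapses to $bc(1+ab)a = b\cdot 1\cdot a = ba$, so $d(1+ba)=1$. This proves $1+ba$ is left invertible with explicit left inverse $1-bca$.

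There is essentially no obstacle here; the only nontrivial step is spotting the correct candidate $d=1-bca$, after which the verification is a two-line manipulation. I would present the argument as: introduce $c$, define $d$, expand $d(1+ba)$, and invoke the defining equation $c(1+ab)=1$ to finish. Note that no norm estimates or topological structure of $\A$ is used, so the lemma is in fact a purely algebraic fact that holds in any unital ring; this will be important later when the same idea is applied to perturbations $1+a^{(2)}_{p,q}\delta a$ and $1+\delta a\, a^{(2)}_{p,q}$ in the stable perturbation arguments that follow.
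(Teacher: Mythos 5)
Your proof is correct, and it is the standard Jacobson-type argument: given $c(1+ab)=1$, the element $d=1-bca$ satisfies $d(1+ba)=1+ba-bc(1+ab)a=1$. The paper itself gives no proof of this lemma (it is quoted from \cite[Lemma 2.2]{DX3}), and your purely algebraic verification is exactly the expected one.
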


\begin{lem}\label{mlem2.2}
Let $a, \delta a \in \A $ and $p, \,q \in \A ^\bullet$ such that $a^{(2,l)}_{p,\,q}$ exists. Put $\bar{a}=a+\delta a$. If $1+\delta aa^{(2,l)}_{p,\,q}$ is invertible, $w=a^{(2,l)}_{p,\,q}(1+\delta aa^{(2,l)}_{p,\,q})^{-1}$. Then $\bar{a}^{(2,l)}_{p,\,q}$ exists and $w=\bar{a}^{(2,l)}_{p,\,q}$ .
\end{lem}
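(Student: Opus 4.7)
The plan is to let $b := a^{(2,l)}_{p,q}$, set $u := 1 + \delta a\cdot b$ (invertible by hypothesis) and $w := b u^{-1}$, and then verify directly the three defining conditions of Definition~\ref{mdef1.2}: namely $w\bar a w = w$, $R_r(w) = R_r(p)$ and $K_r(w) = R_r(q)$. Uniqueness of the $(p,q,l)$--outer generalized inverse (stated before Lemma~\ref{mlem1.3}) will then give $w = \bar a^{(2,l)}_{p,q}$.

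The key tool will be a Jacobson-type identity. Since $u = 1 + \delta a\cdot b$ is invertible, applying Lemma~\ref{mlem2.1} in both directions (or citing the classical formula $(1 + b\,\delta a)^{-1} = 1 - b(1+\delta a\,b)^{-1}\delta a$) shows that $v := 1 + b\cdot\delta a$ is also invertible. Combined with the trivial identity $bu = (1+b\,\delta a)\,b = vb$, this yields a \emph{dual} representation $w = v^{-1}b$. Having $b$ on the left in $w = bu^{-1}$ and on the right in $w = v^{-1}b$ is exactly what makes the central $\bar a$ collapse: using $bab = b$ one computes
\[
w\bar a w \;=\; v^{-1}\,b(a+\delta a)b\,u^{-1} \;=\; v^{-1}\bigl(bab + b\,\delta a\,b\bigr)u^{-1} \;=\; v^{-1}vb\,u^{-1} \;=\; w.
\]

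For the idempotent conditions I will use the factored form $w = bu^{-1}$. Since $u^{-1}$ is invertible, $u^{-1}\mathscr{A} = \mathscr{A}$, so $R_r(w) = bu^{-1}\mathscr{A} = b\mathscr{A} = R_r(b) = R_r(p)$. For the kernel, $wx = 0$ is equivalent to $u^{-1}x \in K_r(b) = R_r(q)$, hence $K_r(w) = u\cdot R_r(q)$; invoking Lemma~\ref{mlem1.3}(4) one has $b(1-q)=b$, i.e.\ $bq = 0$, so $uq = q + \delta a\cdot bq = q$, and therefore $K_r(w) = q\mathscr{A} = R_r(q)$.

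The only real obstacle is recognizing that Jacobson's identity must be applied to upgrade Lemma~\ref{mlem2.1} from left invertibility to two-sided invertibility of $v = 1 + b\,\delta a$, and then noticing the symmetric rewriting $w = v^{-1}b$; once both formulas for $w$ are in hand, the three verifications above are essentially mechanical and rely only on the single absorption identity $bab = b$ and the range/kernel data from Lemma~\ref{mlem1.3}(4).
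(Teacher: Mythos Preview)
Your proof is correct and follows essentially the same route as the paper: both establish the dual representation $w = b(1+\delta a\,b)^{-1} = (1+b\,\delta a)^{-1}b$ and then verify the three defining conditions of Definition~\ref{mdef1.2}. The only cosmetic differences are that your computation of $w\bar a w$ uses the two forms symmetrically (the paper instead splits $\bar a\,b = (ab-1)+(1+\delta a\,b)$), and for $K_r(w)$ you take a slight detour via $uq=q$ where the paper simply reads off $K_r(w)=K_r(b)$ from the form $w=v^{-1}b$.
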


\begin{proof}
We prove our result by showing that $waw = w, R_r(w)= R_r(p), K_r(w) = R_r(q)$. It is easy to check that
$$
w=a^{(2,l)}_{p,\,q}(1+\delta aa^{(2,l)}_{p,\,q})^{-1}= (1+a^{(2,l)}_{p,\,q}\delta a)^{-1}a^{(2,l)}_{p,\,q}.
$$
Then, by using these two equalities, we can show $R_r(w)= R_r(a^{(2,l)}_{p,\,q})=R_r(p)$ and
$K_r(w)= K_r(a^{(2,l)}_{p,\,q})=R_r(q)$. We can also compute
\begin{align*}
w\bar{a}w&=a^{(2,l)}_{p,\,q}(1+\delta aa^{(2,l)}_{p,\,q})^{-1}\bar{a}a^{(2,l)}_{p,\,q}(1+\delta aa^{(2,l)}_{p,\,q})^{-1}\\
&=a^{(2,l)}_{p,\,q}(1+\delta aa^{(2,l)}_{p,\,q})^{-1}[(aa^{(2,l)}_{p,\,q}-1)+(1+\delta a a^{(2,l)}_{p,\,q})](1+\delta aa^{(2,l)}_{p,\,q})^{-1}\\
&=a^{(2,l)}_{p,\,q}(1+\delta aa^{(2,l)}_{p,\,q})^{-1}(aa^{(2,l)}_{p,\,q}-1)(1+\delta aa^{(2,l)}_{p,\,q})^{-1}+w\\
&=w.
\end{align*}
By Definition \ref{mdef1.2} and the uniqueness of $a^{(2,l)}_{p,\,q}$, we see $\bar{a}^{(2,l)}_{p,\,q}$ exists and $w=\bar{a}^{(2,l)}_{p,\,q}$ .
\end{proof}

Obviously, from the proof of Lemma \ref{mlem2.2}, we see that if $a^{(2,l)}_{p,\,q}$ exists and
$1+a^{(2,l)}_{p,\,q}\delta a$ is invertible, set $v=(1+a^{(2,l)}_{p,\,q}\delta a)^{-1}a^{(2,l)}_{p,\,q}$,
then we also have $v=\bar{a}^{(2,l)}_{p,\,q}$. In order to prove the main results about the stable perturbation,
we need one more characterizations of the existence of $a^{(2,l)}_{p,\,q}$.

For an element $a \in \A $ and $p,\; q \in \A ^\bullet$. Let $R_a\colon\A  \to \A $ be the right multiplier on $\A $(i.e., $R_a(x)=xa$ for any $x\in \A$). Then it easy to see that $a^{(2, l)}_{p,q}$ exists in  $\A $ if and only if
$(R_a)^{(2)}_{\A (1-q), \A (1-p)}$ exists in the Banach algebra $B(\A)$. So from the equivalences of (1), (2) and (3) in Lemma \ref{mlem1.3}, dually, we can get the following equivalent conditions for the existence of $a^{(2,l)}_{p,q}$.

\begin{prop}\label{mprop2.3}
Let $a \in \A $ and $p,\; q \in \A ^\bullet$.  Then the following statements are equivalent:
\begin{enumerate*}
\item [\rm(1)] $a^{(2,l)}_{p,\,q}$ exists;
\item [\rm(2)] There exists $c \in \A $ such that $cac = c$, $R_l(c) = R_l(1-q)$ and $K_l(c)= R_l(1-p)$;
\item [\rm(3)] $K_l(a)\cap R_l(1-q)=\{0\}$ and $\A =R_l(1-q)a\dotplus R_l(1-p)$.
\end{enumerate*}
\end{prop}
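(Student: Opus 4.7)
The plan is to exploit a left--right duality between $\mathscr{A}$ and its opposite algebra $\mathscr{A}^{\rm op}$ (equivalently, to pass from the left multiplier $L_a$ to the right multiplier $R_a$ acting on $\mathscr{A}$), and recover Proposition \ref{mprop2.3} as an immediate translation of Lemma \ref{mlem1.3}. The correspondence to be set up reads: the right multiplication by an element becomes a left multiplication in $\mathscr{A}^{\rm op}$, so $R_r^{\rm op}(x)=R_l(x)$ and $K_r^{\rm op}(x)=K_l(x)$, while the equations $bab=b$ and $aba=a$ are unaltered.

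The key point is to identify which idempotents in $\mathscr{A}^{\rm op}$ play the role of $p$ and $q$ from Lemma \ref{mlem1.3}. Using Lemma \ref{mlem1.3}(4), the existence of $a^{(2,l)}_{p,q}$ is equivalent to the existence of $b\in\mathscr{A}$ satisfying $b=pb$, $p=bap$, $b(1-q)=b$, $1-q=(1-q)ab$; in $\mathscr{A}^{\rm op}$ these read, respectively, as $b=(1-q)\cdot_{\rm op}b$, $1-q=b\cdot_{\rm op}a\cdot_{\rm op}b\cdot_{\rm op}(1-q)$, $b=b\cdot_{\rm op}(1-p)$, $1-p=(1-p)\cdot_{\rm op}a\cdot_{\rm op}b$, which is precisely the analogue of Lemma \ref{mlem1.3}(4) in $\mathscr{A}^{\rm op}$ with the idempotents $1-q$ and $1-p$ in place of $p$ and $q$. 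Hence condition (1) of Proposition \ref{mprop2.3} is the same as the existence of $a^{(2)}_{1-q,\,1-p}$ computed in $\mathscr{A}^{\rm op}$.

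Once this identification is in hand, conditions (2) and (3) of Proposition \ref{mprop2.3} follow by mechanically translating Lemma \ref{mlem1.3}(2) and Lemma \ref{mlem1.3}(3) along the same dictionary. Condition (2) of Lemma \ref{mlem1.3} reads, in $\mathscr{A}^{\rm op}$, as the existence of $c$ with $cac=c$, $R_r^{\rm op}(c)=R_r^{\rm op}(1-q)$ and $K_r^{\rm op}(c)=R_r^{\rm op}(1-p)$, which is exactly condition (2) of the proposition after rewriting $R_r^{\rm op}=R_l$ and $K_r^{\rm op}=K_l$. Condition (3) of Lemma \ref{mlem1.3} becomes $K_l(a)\cap R_l(1-q)=\{0\}$ and $\mathscr{A}=R_l(1-q)\,a\dotplus R_l(1-p)$, since $a\cdot_{\rm op}x=xa$ and $R_r^{\rm op}(y)=R_l(y)$; this matches condition (3) of the proposition.

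The only delicate step, and the place where I would spend the most care, is the bookkeeping of why the complementary idempotents $1-q$ and $1-p$ (rather than $p$ and $q$) are the correct ones under the duality. The justification is that in the original one--sided definition, $p$ acts on $b$ from the left (via $b=pb$) and $1-q$ acts from the right (via $b(1-q)=b$); reversing the multiplication swaps these roles, so the idempotent controlling $R_l(b)$ is $1-q$ and the one controlling $K_l(b)=R_l(1-p)$ is $1-p$. With this bookkeeping in place, no further calculation is needed: the equivalences (1)$\Leftrightarrow$(2)$\Leftrightarrow$(3) of Proposition \ref{mprop2.3} are the literal images of the equivalences (1)$\Leftrightarrow$(2)$\Leftrightarrow$(3) of Lemma \ref{mlem1.3} under the duality.
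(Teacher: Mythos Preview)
Your opposite-algebra argument is essentially the paper's own approach: the paper phrases the same duality via the right multiplier $R_a\colon x\mapsto xa$ on $\mathscr{A}$, observing just before the proposition that $a^{(2,l)}_{p,q}$ exists in $\mathscr{A}$ iff $(R_a)^{(2)}_{\mathscr{A}(1-q),\,\mathscr{A}(1-p)}$ exists in $B(\mathscr{A})$, and then reads off $(2)\Leftrightarrow(3)$ from Lemma~\ref{mlem1.3}. The paper additionally supplies a direct verification of $(1)\Leftrightarrow(2)$ using Lemma~\ref{mlem1.1}, which you bypass by applying the duality uniformly; either route is fine.

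One bookkeeping correction in your middle paragraph: when you substitute $P=1-q$ and $Q=1-p$ into Lemma~\ref{mlem1.3}(4) in $\mathscr{A}^{\rm op}$, note that $1-Q=p$, so the third and fourth equations should read $b\cdot_{\rm op}p=b$ and $p=p\cdot_{\rm op}a\cdot_{\rm op}b$, not the versions with $1-p$ that you wrote (and your second equation carries a spurious extra $b$). With these fixes the four $\mathscr{A}^{\rm op}$ equations translate back in $\mathscr{A}$ exactly to $b=pb$, $p=bap$, $b(1-q)=b$, $1-q=(1-q)ab$, so your identification of $(1)$ with the existence of $a^{(2,l)}_{1-q,\,1-p}$ in $\mathscr{A}^{\rm op}$ is correct and the rest of your translation goes through as stated.
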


\begin{proof}
$(1)\Leftrightarrow(2)$ Suppose that $a^{(2,l)}_{p,\,q}$ exists. Let $c=a^{(2,l)}_{p,\,q}$. Then from Definition \ref{mdef1.2}, we know that $cac=c$, and then $ca, \, ac \in \A^\bullet$, $R_r(ca)=R_r(c) =R_r(p)$, $K_r(ac)=K_r(c)= R_r(q)$. Thus, it follows from Lemma \ref{mlem1.1} that
\begin{eqnarray*}
cap = p,\quad pca = ca, \quad ac(1- q)=ac, \quad (1- q)ac=1-q.
\end{eqnarray*}
Then, by using Lemma \ref{mlem1.1} again, we have
\begin{eqnarray}\label{eqr1.1}
K_l(ca)\subset K_l(p)\subset K_l(ca),\quad R_l(ac)\subset R_l(1-q)\subset R_l(ac).
\end{eqnarray}
By using $cac=c$, we have $K_l(ca)=K_l(c)$ and $R_r(ac)=R_r(c)$. Thus from Eq. \eqref{eqr1.1} we see that (2) holds. If (2) holds, similarly, by using Definition \ref{mdef1.2} and Lemma \ref{mlem1.1}, we can obtain $a^{(2,l)}_{p,\,q}$ exists.

$(2)\Leftrightarrow (3)$ By our remark above this lemma, we see these hold simply from the equivalences of (2) and (3) in Lemma \ref{mlem1.3}. Note that we can also prove these equivalences directly by using the right multiplier $R_a$ on $\A$. Here we omit the detail.
\end{proof}

Now we can present one of our main results about the stable perturbation of the generalized inverse $a^{(2,l)}_{p,\,q}$.

\begin{thm}\label{mthm2.4}
Let $a, \, \delta a \in \A $ and $p, \,q \in \A ^\bullet$ such that $a^{(2,l)}_{p,\,q}$ exists. Put $\bar{a}=a+\delta a$. Then the following statements are equivalent:
\begin{enumerate*}
  \item[\rm(1)] $1+\delta aa^{(2,l)}_{p,\,q}$ is invertible;
    \item[\rm(2)] $1+a^{(2,l)}_{p,\,q}\delta a$ is invertible;
    \item[\rm(3)] $\bar{a}^{(2,l)}_{p,\,q}$ exists.
\end{enumerate*}
In this case, we have $\bar{a}^{(2,l)}_{p,\,q}=a^{(2,l)}_{p,\,q}(1+\delta aa^{(2,l)}_{p,\,q})^{-1}=(1+a^{(2,l)}_{p,\,q}\delta a)^{-1}a^{(2,l)}_{p,\,q}.$
\end{thm}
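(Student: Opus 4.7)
The plan is to prove $(1)\Leftrightarrow(2)$ separately via a two-sided Jacobson-type identity, to deduce $(1)\Rightarrow(3)$ (together with the explicit formula) directly from Lemma~\ref{mlem2.2}, and to devote the main effort to the reverse implication $(3)\Rightarrow(1)$ by exhibiting an explicit inverse.

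For $(1)\Leftrightarrow(2)$ I would strengthen Lemma~\ref{mlem2.1} to two-sided invertibility: if $u=(1+\delta a\cdot a^{(2,l)}_{p,q})^{-1}$, then a short computation using both identities $u+u\,\delta a\cdot a^{(2,l)}_{p,q}=1$ and $u+\delta a\cdot a^{(2,l)}_{p,q}\,u=1$ shows that $1-a^{(2,l)}_{p,q}\,u\,\delta a$ is a two-sided inverse of $1+a^{(2,l)}_{p,q}\delta a$; the converse is symmetric. The implication $(1)\Rightarrow(3)$, together with the two explicit formulas for $\bar{a}^{(2,l)}_{p,q}$ displayed in the conclusion, is exactly the content of Lemma~\ref{mlem2.2} and the remark following its proof.

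The substantive step is $(3)\Rightarrow(1)$. Write $b=a^{(2,l)}_{p,q}$ and $\bar b=\bar a^{(2,l)}_{p,q}$; the candidate inverse of $1+\delta a\cdot b$ I would try is $1-\delta a\cdot\bar b$. Since $b$ and $\bar b$ realize the same prescribed idempotents, Lemma~\ref{mlem1.3}(4) applies to both, giving $b=pb$, $p=bap$, $b=b(1-q)$, $(1-q)ab=1-q$, and the same identities with $(b,a)$ replaced by $(\bar b,\bar a)$. From these I would extract the four mixed products
\[
ba\bar b=\bar b,\qquad \bar b\bar a b=b,\qquad b\bar a\bar b=b,\qquad \bar b\,a b=\bar b,
\]
by short manipulations such as $ba\bar b=(bap)\bar b=p\bar b=\bar b$ and $b\bar a\bar b=b(1-q)\bar a\bar b=b\bigl((1-q)\bar a\bar b\bigr)=b(1-q)=b$. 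These collapse the cross terms to $b\cdot\delta a\cdot\bar b=b\bar a\bar b-ba\bar b=b-\bar b$ and symmetrically $\bar b\cdot\delta a\cdot b=b-\bar b$, after which the identities $(1+\delta a\,b)(1-\delta a\,\bar b)=1$ and $(1-\delta a\,\bar b)(1+\delta a\,b)=1$ both follow by routine expansion.

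The main obstacle will be assembling those four mixed products cleanly: one must transport the idempotents $p$ and $1-q$ across defining relations that belong to two \emph{different} elements $a$ and $\bar a$, and it is essential at this point that $b$ and $\bar b$ share the same prescribed idempotents — otherwise there is no reason for the identities of $b$ to interact with $\bar a$, nor for those of $\bar b$ to interact with $a$. Once $(3)\Rightarrow(1)$ is secured, the two explicit formulas for $\bar a^{(2,l)}_{p,q}$ are read off directly from Lemma~\ref{mlem2.2}.
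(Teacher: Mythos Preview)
Your proposal is correct, and the implication $(3)\Rightarrow(1)$ is handled by a genuinely different argument from the paper's.

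For $(3)\Rightarrow(1)$ the paper argues structurally: from Lemma~\ref{mlem1.3} it writes $\A=\bar a R_r(p)\dotplus R_r(q)$ and uses this decomposition to produce, for each $x\in\A$, a preimage under left multiplication by $1+\delta a\,b$, concluding right invertibility; then it invokes the dual left-sided characterization (Proposition~\ref{mprop2.3}) to obtain a decomposition $\A=R_l(1-q)\bar a\dotplus R_l(1-p)$ yielding left invertibility of $1+b\,\delta a$, and finally transfers this via Lemma~\ref{mlem2.1}. Your route is purely algebraic: you exhibit the explicit two-sided inverse $1-\delta a\,\bar b$ by first establishing the four mixed identities $ba\bar b=\bar b$, $\bar b\bar ab=b$, $b\bar a\bar b=b$, $\bar bab=\bar b$ from Lemma~\ref{mlem1.3}(4), which collapse both $b\,\delta a\,\bar b$ and $\bar b\,\delta a\,b$ to $b-\bar b$. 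This avoids Proposition~\ref{mprop2.3} and Lemma~\ref{mlem2.1} entirely, needs no direct-sum decompositions, and as a bonus produces the inverse in closed form; the paper's argument, by contrast, makes the role of the ambient decompositions $\A=\bar a R_r(p)\dotplus R_r(q)$ and its left analogue more transparent. Your derivations of the four mixed products are correct as written (e.g.\ $b\bar a\bar b=b(1-q)\bar a\bar b=b\bigl((1-q)\bar a\bar b\bigr)=b(1-q)=b$), and the final expansions go through exactly as you indicate.
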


\begin{proof}
$(1)\Leftrightarrow (2)$ follows from the well-known spectral theory in Banach algebras.

$(2)\Rightarrow (3)$ We prove our result by using Lemma \ref{mlem1.3}. Let $x\in K_r(\bar{a})\cap R_r(p)=\{0\}$. Since $R_r(p)= R_r(a^{(2,l)}_{p,\,q})$, then there exists some $t \in \A$ satisfying $x = a^{(2)}_{p,\,q}t$ and $\bar{a}t = 0$. Thus we have
\begin{align*}
(1+a^{(2,l)}_{p,\,q}\delta a)a^{(2,l)}_{p,\,q}t &=a^{(2,l)}_{p,\,q}t+a^{(2,l)}_{p,\,q}\delta aa^{(2,l)}_{p,\,q}t\\&=a^{(2,l)}_{p,\,q}(a+\delta a)a^{(2,l)}_{p,\,q}t\\&=a\bar{a}t=0.
\end{align*}
Since $1+a^{(2,l)}_{p,\,q}\delta a$ is invertible, it follows that $x=a^{(2,l)}_{p,\,q}t=0$. Therefore,
\begin{eqnarray}\label{eqth2.1}
K_r(\bar{a})\cap R_r(p)=\{0\}
\end{eqnarray}
Let $s \in \bar{a}R_r(p)\cap R_r(q)$. Since $R_r(p)= R_r(a^{(2,l)}_{p,\,q})$ and $R_r(q)=K_r(a^{(2,l)}_{p,\,q})$, then there exists some $z \in \A$ such that $s = \bar{a}a^{(2,l)}_{p,\,q}z$ and $a^{(2,l)}_{p,\,q}s=0$. Similar to the proof of Eq.\,\eqref{eqth2.1}, we can get $s=a^{(2,l)}_{p,\,q}t=0$, i.e, $\bar{a}R_r(p)\cap R_r(q)=\{0\}$. Since $1+a^{(2,l)}_{p,\,q}\delta a$ is invertible, then for any $w \in \A$ there is some $v\in \A$ such that $a^{(2,l)}_{p,\,q}w=(1+ a^{(2,l)}_{p,\,q}\delta a)v$. From $\bar{a}=a+\delta a$, we have
$$(1-a^{(2,l)}_{p,\,q}a)v=a^{(2,l)}_{p,\,q}(w- \bar{a} v) \in R_r(a^{(2,l)}_{p,\,q}a)\cap K_r(a^{(2,l)}_{p,\,q}a)=\{0\}.$$
Thus, $w-\bar{a }v \in K_r(a^{(2,l)}_{p,\,q})$ and $v=a^{(2,l)}_{p,\,q}a v\in R_r(a^{(2,l)}_{p,\,q})$. Since for $w \in \A$, we also have $w=\bar{a }v +(w-\bar{a }v )\in \bar{a}R_r(p)\dotplus R_r(q).$ Thus, we have
\begin{eqnarray}\label{eqth2.2}
\A =\bar{a}R_r(p)\dotplus R_r(q).
\end{eqnarray}
Now, from Eqs. \eqref{eqth2.1} and \eqref{eqth2.2}, by using Lemma \ref{mlem1.3}, we see that $\bar{a}^{(2,l)}_{p,\,q}$ exists.

$(3)\Rightarrow (1)$ Suppose that $\bar{a}^{(2,l)}_{p,\,q}$ exists, we want to prove $1+\delta aa^{(2,l)}_{p,\,q}$ is both left and right invertible. Since $\bar{a}^{(2,l)}_{p,\,q}$ exists, then from Lemma \ref{mlem1.3}, $\A =\bar{a}R_r(p)\dotplus R_r(q)=\bar{a}R_r(a^{(2,l)}_{p,\,q})\dotplus K_r(a^{(2,l)}_{p,\,q})$. Thus, for any $x\in\A$, we can write $x=\bar{a}a^{(2,l)}_{p,\,q}t_1 +t_2$, where $t_1\in \A$ and $t_2\in K_r(a^{(2,l)}_{p,\,q})$. Set $s=aa^{(2,l)}_{p,\,q}t_1+t_2$, then
\begin{align*}
(1+\delta aa^{(2,l)}_{p,\,q})s&=(1+\delta aa^{(2,l)}_{p,\,q})(aa^{(2,l)}_{p,\,q}t_1+t_2)\\&=\bar{a}a^{(2,l)}_{p,\,q}t_1 +t_2=x.
\end{align*}
Since $x\in \A$ is arbitrary, let $x=1$, then we see that $1+\delta aa^{(2,l)}_{p,\,q}$ is right invertible. Now we prove that $1+\delta aa^{(2,l)}_{p,\,q}$ is also left invertible. In fact, from Proposition \ref{mprop2.3}, we also have $\A =R_l(1-q)\bar{a}\dotplus R_l(1-p)=R_l(a^{(2,l)}_{p,\,q})\bar{a}\dotplus K_l(a^{(2,l)}_{p,\,q})$ for $\bar{a}^{(2,l)}_{p,\,q}$ exists. Then for any $z\in \A$, we can write $z=s_1a^{(2,l)}_{p,\,q}\bar{a} +s_2$, where $s_1\in R_l(a^{(2,l)}_{p,\,q}a)$ and $s_2\in K_l(a^{(2,l)}_{p,\,q})$. Let $t=s_1+s_2$,
then we have
\begin{align*}
t(1+a^{(2,l)}_{p,\,q}\delta a)&=(s_1+s_2)(1+a^{(2,l)}_{p,\,q}\delta a)\\&=s_1+s_2+s_1a^{(2,l)}_{p,\,q}(\bar{a}-a)\\&=s_1a^{(2,l)}_{p,\,q}\bar{a} +s_2 +s_1(1-a^{(2,l)}_{p,\,q}a)\\&=z.
\end{align*}
Since $z\in \A$ is arbitrary, let $z=1$, then we get that $1+ a^{(2,l)}_{p,\,q}\delta a$ is left invertible. But from Lemma \ref{mlem2.1} we see $1+\delta aa^{(2,l)}_{p,\,q}$ is also left invertible. Thus, $1+\delta aa^{(2,l)}_{p,\,q}$ is invertible.

Now, from Lemma \ref{mlem2.2}, $\bar{a}^{(2,l)}_{p,\,q}=a^{(2,l)}_{p,\,q}(1+\delta aa^{(2,l)}_{p,\,q})^{-1}=(1+a^{(2,l)}_{p,\,q}\delta a)^{-1}a^{(2,l)}_{p,\,q}.$
This completes the proof.
\end{proof}

\begin{lem}\label{mlem2.6}
Let $a,\, \delta a \in \A $ and $p, \,q \in \A ^\bullet$ such that $a^{(2,l)}_{p,\,q}$ exists. If $1+a^{(2,l)}_{p,\,q}\delta a$ is invertible. Put $\bar{a}=a+\delta a$ and
$f=(1+ a^{(2,l)}_{p,\,q}\delta a)^{-1}(1-a^{(2,l)}_{p,\,q}a)$. Then
\begin{enumerate*}
  \item[\rm(1)] $f\in \A^\bullet$ with $K_r(\bar{a})\subset R_r(f)$;
  \item[\rm(2)] $K_r(\bar{a})= R_r(f)$ if and only if $R_r(\bar{a})\cap R_r(q)=\{0\}$.
 \end{enumerate*}
\end{lem}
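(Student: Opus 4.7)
Write $b = a^{(2,l)}_{p,q}$ for brevity; the plan is to exploit the idempotent structure encoded in $bab=b$. Right-multiplying by $a$ shows $(ba)^2=ba$, so both $ba$ and $1-ba$ are idempotents, and the crucial identity $(1-ba)b = b-bab = 0$ holds. This propagates to $(1-ba)\,b\delta a = 0$, whence $(1-ba)(1+b\delta a)=1-ba$, and post-multiplying by $(1+b\delta a)^{-1}$ yields the key relation
\[
(1-ba)(1+b\delta a)^{-1} = 1-ba.
\]
This single identity is the engine of the entire proof.

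For part (1), using the key relation I compute
\[
f^2 = (1+b\delta a)^{-1}(1-ba)(1+b\delta a)^{-1}(1-ba) = (1+b\delta a)^{-1}(1-ba)^2 = f,
\]
so $f\in\A^\bullet$. For the inclusion $K_r(\bar{a})\subset R_r(f)$, since $f$ is idempotent it suffices to check $fx = x$ whenever $\bar{a}x=0$. But $\bar{a}x=0$ gives $ax = -\delta a\cdot x$, hence $fx = (1+b\delta a)^{-1}(x-bax) = (1+b\delta a)^{-1}(x+b\delta a\cdot x) = x$, as required.

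For part (2), the pivotal observation is the cleaner description
\[
R_r(f) = K_r(b\bar{a}).
\]
Indeed, $f$ being idempotent, $x\in R_r(f)$ iff $fx=x$; unwinding $(1-ba)x = (1+b\delta a)x$ gives exactly $bax + b\delta a\cdot x = 0$, i.e.\ $b\bar{a}x = 0$. With this in hand, both directions of (2) become immediate. If $R_r(\bar{a})\cap R_r(q)=\{0\}$, then for any $x\in R_r(f)$ we have $\bar{a}x\in K_r(b) = R_r(q)$ together with $\bar{a}x \in R_r(\bar{a})$, forcing $\bar{a}x = 0$ and giving the missing inclusion $R_r(f)\subset K_r(\bar{a})$. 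Conversely, if $K_r(\bar{a}) = R_r(f)$, then any $y=\bar{a}x \in R_r(\bar{a})\cap R_r(q)$ satisfies $by=0$, hence $x\in K_r(b\bar{a}) = R_r(f) = K_r(\bar{a})$, forcing $y = \bar{a}x = 0$.

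I do not anticipate a real obstacle: the whole argument is driven by $(1-ba)b=0$, the idempotency of $ba$, and the defining relation $K_r(b) = R_r(q)$. The only thing one must spot is the reformulation $R_r(f) = K_r(b\bar{a})$, after which part (2) essentially reduces to a tautology.
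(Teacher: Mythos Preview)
Your proof is correct and follows essentially the same approach as the paper. The paper encodes the central relation as the pointwise identity $fx = x - (1+b\delta a)^{-1}b\bar{a}x$ (their Eq.~(2.1)), which is exactly your reformulation $R_r(f) = K_r(b\bar{a})$ read as ``$fx = x$ iff $b\bar{a}x=0$''; both directions of part~(2) are then argued identically.
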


\begin{proof}
(1) Since $(1-a^{(2,l)}_{p,\,q}a)(1+a^{(2,l)}_{p,\,q} \delta a)=1- a^{(2,l)}_{p,\,q}a$ and $1+a^{(2,l)}_{p,\,q}\delta a$
is invertible, we have $1-a^{(2,l)}_{p,\,q}a=(1-a^{(2,l)}_{p,\,q}a)(1+a^{(2,l)}_{p,\,q}\delta a)^{-1}.$
Thus,
$$
f^2=(1+ a^{(2,l)}_{p,\,q}\delta a)^{-1}(1-a^{(2,l)}_{p,\,q}a)(1+ a^{(2,l)}_{p,\,q}\delta a)^{-1}(1-a^{(2,l)}_{p,\,q}a)
= f.
$$

Now for any $x\in \A$, from $(1-aa^{(2,l)}_{p,\,q})x = (1+ a^{(2,l)}_{p,\,q}\delta a)x -a^{(2,l)}_{p,\,q}\bar{a}x$, we
have
\begin{equation}\label{equa2.1}
fx=(1+ a^{(2,l)}_{p,\,q}\delta a)^{-1}(1-a^{(2,l)}_{p,\,q}a)x=x-(1+ a^{(2,l)}_{p,\,q}\delta a)^{-1}a^{(2,l)}_{p,\,q}
\bar{a}x.
\end{equation}
Eq. \eqref{equa2.1} implies that $K_r(\bar{a})\subset R_r(f)$.

(2) $(\Rightarrow)$ Let $t\in R_r(\bar{a})\cap R_r(q)=\{0\}$. Since $R_r(q)=K_r(a^{(2,l)}_{p,\,q})$, then there is some $x\in\A$ such that $t=\bar{a}x$ and $a^{(2,l)}_{p,\,q}\bar{a}x=a^{(2,l)}_{p,\,q}t=0$. Thus, $x=fx$ by Eq. \eqref{equa2.1}. So, $x\in R_r(f)=K_r(\bar{a})$ and $t=\bar{a}x=0$, i.e., $R_r(\bar{a})\cap R_r(q)=\{0\}$.

$(\Leftarrow)$ Thanks to (1), we need only to prove $R_r(f)\subset K_r(\bar{a})$. Let $t\in R_r(f)$. Since
$f\in \A^\bullet$, we have $t=ft$. So by Eq. \eqref{equa2.1}, we get
$(1+ a^{(2,l)}_{p,\,q}\delta a)^{-1}a^{(2,l)}_{p,\,q}\bar{a}t=0$ and then $a^{(2,l)}_{p,\,q}\bar{a}t=0$. Hence
$\bar{a}t \in R_r(\bar{a})\cap R_r(q)=\{0\}$ and $K_r(\bar{a})= R_r(f)$.
\end{proof}

Similar to \cite[Proposition 2.2]{Xue2} or \cite[Theorem 2.4.7]{Xue1}, but by using some of our characterizations for $a^{(2,l)}_{p,\,q}$ and $a^{(l)}_{p,\,q}$, we can
obtain the following results about the stable perturbations for these two kinds of generalized inverses.
\begin{thm}\label{mthm2.7}
Let $a,\, \delta a \in \A $ and $p, \,q \in \A ^\bullet$ such that $a^{(2,l)}_{p,\,q}$ exists. Suppose that $1+a^{(2,l)}_{p,\,q}\delta a$ is invertible. Put $\bar{a}=a+\delta a$ and
$w=(1+ a^{(2,l)}_{p,\,q}\delta a)^{-1}a^{(2,l)}_{p,\,q}$. Then the following statements are equivalent:
\begin{enumerate*}
  \item[\rm(1)] $w=\bar{a}^{(l)}_{p,\,q}$, i.e., $\bar{a}^{(2,l)}_{p,\,q}= \bar{a}^{(l)}_{p,\,q}$;
  \item[\rm(2)] $R_r(\bar{a})\cap R_r(q)=\{0\}$, i.e., $\bar{a}$ is a stable perturbation of $a$;
  \item[\rm(3)] $\bar{a}(1+a^{(2,l)}_{p,\,q}\delta a)^{-1}(1-a^{(2,l)}_{p,\,q}a)=0$;
  \item[\rm(4)] $(1-aa^{(2,l)}_{p,\,q})(1+\delta a a^{(2,l)}_{p,\,q})^{-1}\bar{a}=0$.
 \end{enumerate*}
\end{thm}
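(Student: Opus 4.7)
The plan is to reduce conditions (1), (3) and (4) to a single algebraic identity, $\bar a w\bar a=\bar a$, and then to connect that identity to (2) via Lemma \ref{mlem2.6}. Since by Theorem \ref{mthm2.4} we already have $w=\bar a^{(2,l)}_{p,q}$ (so $w\bar aw=w$, $R_r(w)=R_r(p)$ and $K_r(w)=R_r(q)$), Definition \ref{mdef1.2} and Lemma \ref{mlem1.4} tell us that (1) holds if and only if the extra equation $\bar aw\bar a=\bar a$ is satisfied.

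The technical step I would do next is to verify the two clean identities
\begin{equation*}
w\bar a \,=\, 1-f,\qquad \bar aw \,=\, 1-g,
\end{equation*}
where $f=(1+a^{(2,l)}_{p,q}\delta a)^{-1}(1-a^{(2,l)}_{p,q}a)$ is the idempotent introduced in Lemma \ref{mlem2.6} and $g=(1-aa^{(2,l)}_{p,q})(1+\delta a\, a^{(2,l)}_{p,q})^{-1}$. Both are short direct computations starting from the two expressions $w=(1+a^{(2,l)}_{p,q}\delta a)^{-1}a^{(2,l)}_{p,q}=a^{(2,l)}_{p,q}(1+\delta a\, a^{(2,l)}_{p,q})^{-1}$; the only trick is to rewrite $a^{(2,l)}_{p,q}a+a^{(2,l)}_{p,q}\delta a=-(1-a^{(2,l)}_{p,q}a)+(1+a^{(2,l)}_{p,q}\delta a)$ and then factor (and symmetrically on the other side). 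Once these are in hand, condition (3), $\bar af=0$, becomes $\bar a(1-w\bar a)=0$, i.e.\ $\bar a=\bar aw\bar a$; and condition (4), $g\bar a=0$, becomes $(1-\bar aw)\bar a=0$, again $\bar a=\bar aw\bar a$. This yields (1) $\Leftrightarrow$ (3) $\Leftrightarrow$ (4) in one stroke.

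To close the loop I would read (3) $\Leftrightarrow$ (2) directly off Lemma \ref{mlem2.6}: the identity $\bar af=0$ is equivalent to $R_r(f)\subset K_r(\bar a)$, which, combined with part (1) of that lemma (always $K_r(\bar a)\subset R_r(f)$), forces $K_r(\bar a)=R_r(f)$; part (2) of the lemma then says that this equality is in turn equivalent to $R_r(\bar a)\cap R_r(q)=\{0\}$, which is (2).

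The one substantive step is discovering and verifying the two clean formulas $w\bar a=1-f$ and $\bar aw=1-g$; they encapsulate the whole content of the theorem and make the equivalences fall out essentially for free, without any further appeal to the direct-sum decompositions of Lemma \ref{mlem1.4} or to spectral arguments beyond what Theorem \ref{mthm2.4} already supplies. A minor bookkeeping point to watch is that $\bar af=0$ (an equation of elements) should be read as $R_r(f)\subset K_r(\bar a)$, so that it matches the hypotheses of Lemma \ref{mlem2.6}; the analogous remark applies to $g\bar a=0$.
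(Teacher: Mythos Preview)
Your proof is correct and close in spirit to the paper's, but the organization differs. The paper argues $(1)\Leftrightarrow(2)$ by invoking the structural characterizations in Lemmas~\ref{mlem1.3} and~\ref{mlem1.4} together with Theorem~\ref{mthm2.4}; then obtains $(2)\Leftrightarrow(3)$ from Lemma~\ref{mlem2.6}; and finally shows $(3)\Leftrightarrow(4)$ by a direct chain of equalities proving that the two elements in (3) and (4) are literally equal (each rewrites as $\bar a-\bar aw\bar a$). You instead isolate the identities $w\bar a=1-f$ and $\bar aw=1-g$ up front, which makes (1), (3) and (4) all collapse to the single equation $\bar aw\bar a=\bar a$, and lets you bypass the direct-sum statements of Lemma~\ref{mlem1.4} entirely for the $(1)$ part. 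Your route is a bit cleaner in that it extracts two reusable formulas rather than running one long ad hoc manipulation; the paper's route, on the other hand, yields the slightly stronger observation that the expressions in (3) and (4) are not just simultaneously zero but actually the same element of $\A$. Both arguments rest on the same two pillars, Theorem~\ref{mthm2.4} and Lemma~\ref{mlem2.6}, so the difference is one of packaging rather than substance. (One small remark: your appeal to Lemma~\ref{mlem1.4} in the first paragraph is unnecessary --- Definition~\ref{mdef1.2} alone already gives $(1)\Leftrightarrow\bar aw\bar a=\bar a$ once $w=\bar a^{(2,l)}_{p,q}$ is known.)
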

\begin{proof}
The implication $(1)\Leftrightarrow (2)$ comes from Lemma \ref{mlem1.3}, Lemma \ref{mlem1.4} and Theorem \ref{mthm2.4}.
The implication $(2)\Leftrightarrow (3)$ comes from Lemma \ref{mlem2.6}.

$(3)\Leftrightarrow (4)$ we can compute in the following way,
\begin{align*}
\bar{a}(1+a^{(2,l)}_{p,\,q}\delta a)^{-1}(1-a^{(2,l)}_{p,\,q}a)&=\bar{a} (1+a^{(2,l)}_{p,\,q}\delta a)^{-1}[(1+a^{(2,l)}_{p,\,q}\delta a)-a^{(2,l)}_{p,\,q}\bar{a}]\\&=\bar{a}-\bar{a}a^{(2,l)}_{p,\,q}(1+\delta aa^{(2,l)}_{p,\,q})^{-1}\bar{a}\\&=\bar{a}-[(1+\delta a a^{(2,l)}_{p,\,q})-(1-aa^{(2,l)}_{p,\,q})](1+\delta a a^{(2,l)}_{p,\,q})^{-1}\bar{a}\\&=(1-aa^{(2,l)}_{p,\,q})(1+\delta a a^{(2,l)}_{p,\,q})^{-1}\bar{a}.
\end{align*}
This completes the proof.
\end{proof}

Furthermore, by using the above theorem, we have the following results.
\begin{cor}\label{mcor2.8}
Let $a,\, \delta a \in \A $ and $p, \,q \in \A ^\bullet$ such that $a^{(2,l)}_{p,\,q}$ exists. Put $\bar{a}=a+\delta a$. If $1+a^{(2,l)}_{p,\,q}\delta a$ is invertible. Then the following statements are equivalent:
\begin{enumerate*}
  \item[\rm(1)] $R_r(\bar{a})\cap R_r(q)=\{0\}$, i.e., $\bar{a}$ is stable perturbation of $a$;
  \item[\rm(2)] $(1+a^{(2,l)}_{p,\,q}\delta a)^{-1}K_r(a^{(2,l)}_{p,\,q}a)=K_r(\bar a)$;
  \item[\rm(3)]  $(1+\delta a a^{(2,l)}_{p,\,q})^{-1}R_r(\bar{a})=R_r(aa^{(2,l)}_{p,\,q})$.
 \end{enumerate*}
\end{cor}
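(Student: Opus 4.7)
The plan is to reduce both equivalences to results already established in the paper, the main tools being Lemma~\ref{mlem2.6} and Theorem~\ref{mthm2.7}. Since $1+a^{(2,l)}_{p,\,q}\delta a$ is invertible, Theorem~\ref{mthm2.4} implies that $u:=1+\delta a\,a^{(2,l)}_{p,\,q}$ is also invertible, which is what lets us move freely between the left and right formulations.

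For the equivalence $(1)\Leftrightarrow(2)$, I would apply Lemma~\ref{mlem2.6}(2) directly: it says that (1) holds exactly when $K_r(\bar a)=R_r(f)$, where $f=(1+a^{(2,l)}_{p,\,q}\delta a)^{-1}(1-a^{(2,l)}_{p,\,q}a)$. Since $a^{(2,l)}_{p,\,q}a$ is idempotent, Lemma~\ref{mlem1.1}(1) gives $R_r(1-a^{(2,l)}_{p,\,q}a)=K_r(a^{(2,l)}_{p,\,q}a)$, and the invertibility of $1+a^{(2,l)}_{p,\,q}\delta a$ then yields $R_r(f)=(1+a^{(2,l)}_{p,\,q}\delta a)^{-1}K_r(a^{(2,l)}_{p,\,q}a)$, which is precisely the right-hand side of (2).

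For $(1)\Leftrightarrow(3)$, the key computation is $u\cdot aa^{(2,l)}_{p,\,q}=\bar a\,a^{(2,l)}_{p,\,q}$, which follows at once from the outer-inverse identity $a^{(2,l)}_{p,\,q}aa^{(2,l)}_{p,\,q}=a^{(2,l)}_{p,\,q}$. Rearranging gives $aa^{(2,l)}_{p,\,q}=u^{-1}\bar a\,a^{(2,l)}_{p,\,q}$, and therefore the inclusion $R_r(aa^{(2,l)}_{p,\,q})\subset u^{-1}R_r(\bar a)$ holds \emph{unconditionally}. The substantive content is the reverse inclusion. By Theorem~\ref{mthm2.7} (equivalence of its conditions (2) and (4)), condition (1) is equivalent to $(1-aa^{(2,l)}_{p,\,q})u^{-1}\bar a=0$; since $aa^{(2,l)}_{p,\,q}$ is idempotent, its right kernel equals $R_r(aa^{(2,l)}_{p,\,q})$, so this identity is precisely the assertion that $u^{-1}R_r(\bar a)\subset R_r(aa^{(2,l)}_{p,\,q})$. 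Combining with the unconditional inclusion yields (3), and the derivation is visibly reversible.

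The main thing to get right is the direction of the ``free'' inclusion in the (3) argument: once one recognises that $aa^{(2,l)}_{p,\,q}$ factors through $u^{-1}\bar a\,a^{(2,l)}_{p,\,q}$, the harder-looking set-equality in (3) collapses to the single inclusion furnished by Theorem~\ref{mthm2.7}(4). No further work with the gap function or with Proposition~\ref{mprop2.3} is required.
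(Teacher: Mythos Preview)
Your argument is correct and matches the paper's own proof in spirit: the paper simply records $K_r(a^{(2,l)}_{p,\,q}a)=R_r(1-a^{(2,l)}_{p,\,q}a)$ and $R_r(aa^{(2,l)}_{p,\,q})=K_r(1-aa^{(2,l)}_{p,\,q})$ and then defers everything to Theorem~\ref{mthm2.7}, whereas you spell out the two inclusions and invoke Lemma~\ref{mlem2.6}(2) directly for $(1)\Leftrightarrow(2)$. One small wording slip: where you write ``since $aa^{(2,l)}_{p,\,q}$ is idempotent, its right kernel equals $R_r(aa^{(2,l)}_{p,\,q})$'', the antecedent of ``its'' should be $1-aa^{(2,l)}_{p,\,q}$, not $aa^{(2,l)}_{p,\,q}$; the mathematics you use is correct.
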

\begin{proof}
Note that we have $K_r(a^{(2,l)}_{p,\,q}a)=R_r(1-a^{(2,l)}_{p,\,q}a)$ and
$R_r(aa^{(2,l)}_{p,\,q})=K_r(1-aa^{(2,l)}_{p,\,q})$. So we can get the assertions by using Theorem \ref{mthm2.7}.
\end{proof}

\begin{thm}\label{tmthm2.7}
Let $a,\, \delta a \in \A $ and $p, \,q \in \A ^\bullet$ such that $a^{(l)}_{p,\,q}$ exists. Put $\bar{a}=a+\delta a$. Then the following statements are equivalent:
\begin{enumerate*}
  \item[\rm(1)] $1+a^{(l)}_{p,\,q}\delta a$ is invertible, $R_r(\bar{a})=K_r(q)$ and  $\bar{a}^{(l)}_{p,\,q}=a^{(l)}_{p,\,q}(1+ \delta a a^{(l)}_{p,\,q})^{-1}$.
  \item[\rm(2)]  $R_r(\bar{a})\cap R_r(q)=\{0\}$, $K_r(\bar{a})\cap R_r(p)=\{0\}$ and $\bar{a}R_r(p)=K_r(q)$.
 \end{enumerate*}
\end{thm}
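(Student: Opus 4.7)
The plan is to bootstrap from Theorem \ref{mthm2.4}, which already settles the equivalence between invertibility of $1+a^{(2,l)}_{p,q}\delta a$ and existence of $\bar{a}^{(2,l)}_{p,q}$ (together with the explicit formula), and then to upgrade to the full $(l)$-inverse via the additional direct-sum conditions in Lemma \ref{mlem1.4}. Throughout, one exploits that $a^{(l)}_{p,q}=a^{(2,l)}_{p,q}$ by definition, and that the decomposition $\A=K_r(q)\dotplus R_r(q)$ is automatic for the idempotent $q$.

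For $(1)\Rightarrow(2)$, the invertibility hypothesis together with Theorem \ref{mthm2.4} yields existence of $\bar{a}^{(2,l)}_{p,q}$ with $\bar{a}^{(2,l)}_{p,q}=a^{(l)}_{p,q}(1+\delta a\, a^{(l)}_{p,q})^{-1}$; by the formula in (1), this coincides with $\bar{a}^{(l)}_{p,q}$, so $\bar{a}^{(l)}_{p,q}$ exists. Lemma \ref{mlem1.4}(3) then delivers the two intersection conditions together with the direct sum $\A=\bar{a}R_r(p)\dotplus R_r(q)$. To sharpen the latter to $\bar{a}R_r(p)=K_r(q)$, note that $R_r(\bar{a})=K_r(q)$ forces $\bar{a}R_r(p)\subseteq R_r(\bar{a})=K_r(q)$; conversely, any $k\in K_r(q)$ decomposes as $k=\bar{a}s+r$ with $s\in R_r(p)$ and $r\in R_r(q)$, so $r=k-\bar{a}s\in K_r(q)\cap R_r(q)=\{0\}$, giving $k=\bar{a}s\in\bar{a}R_r(p)$.

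For $(2)\Rightarrow(1)$, the equality $\bar{a}R_r(p)=K_r(q)$ combined with $\A=K_r(q)\dotplus R_r(q)$ yields $\A=\bar{a}R_r(p)\dotplus R_r(q)$. Together with the two intersection hypotheses, Lemma \ref{mlem1.4}(3) produces $\bar{a}^{(l)}_{p,q}$, hence in particular $\bar{a}^{(2,l)}_{p,q}$, and Theorem \ref{mthm2.4} then supplies both invertibility of $1+a^{(l)}_{p,q}\delta a$ and the formula $\bar{a}^{(l)}_{p,q}=a^{(l)}_{p,q}(1+\delta a\, a^{(l)}_{p,q})^{-1}$. It remains to identify $R_r(\bar{a})$ with $K_r(q)$: the inclusion $K_r(q)=\bar{a}R_r(p)\subseteq R_r(\bar{a})$ is immediate, while for the reverse, Lemma \ref{mlem1.4}(2) gives $\A=R_r(\bar{a})\dotplus R_r(q)$; any $x\in R_r(\bar{a})$, written as $x=k+r$ with $k\in K_r(q)\subseteq R_r(\bar{a})$ and $r\in R_r(q)$, satisfies $r=x-k\in R_r(\bar{a})\cap R_r(q)=\{0\}$, whence $x=k\in K_r(q)$.

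The main obstacle I anticipate is the careful bookkeeping among three competing complements of $R_r(q)$ in $\A$, namely $K_r(q)$, $\bar{a}R_r(p)$, and $R_r(\bar{a})$. The crux of both directions is essentially a uniqueness-of-complement argument under the inclusion $\bar{a}R_r(p)\subseteq R_r(\bar{a})\cap K_r(q)$, which forces all three subspaces to collapse to a single one once a common second summand $R_r(q)$ is available.
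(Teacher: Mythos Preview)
Your proof is correct and follows essentially the same route as the paper: both directions pivot on Theorem~\ref{mthm2.4} for the invertibility/existence link and on Lemma~\ref{mlem1.4} for the $(l)$-inverse characterization, and the remaining work is the complement-matching argument among $K_r(q)$, $\bar aR_r(p)$, and $R_r(\bar a)$ against the common summand $R_r(q)$. Your execution is in fact slightly more direct than the paper's: in $(1)\Rightarrow(2)$ the paper first shows $\bar aR_r(p)=R_r(\bar a)$ and then invokes $R_r(\bar a)=K_r(q)$, whereas you go straight to $\bar aR_r(p)=K_r(q)$ using $K_r(q)\cap R_r(q)=\{0\}$; and the paper explicitly appeals to Theorem~\ref{mthm2.7} to extract $R_r(\bar a)\cap R_r(q)=\{0\}$, while you read it (together with $K_r(\bar a)\cap R_r(p)=\{0\}$) directly off Lemma~\ref{mlem1.4}(3) once $\bar a^{(l)}_{p,q}$ is known to exist.
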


\begin{proof}
$(1)\Rightarrow (2)$ Suppose that (1) holds. Since $a^{(l)}_{p,\,q}$ exists, we obtain that $a^{(2,l)}_{p,\,q}$ exists and $a^{(2,l)}_{p,\,q} =a^{(l)}_{p,\,q}$. Thus, from our assumption, by using Theorem \ref{mthm2.7} and Lemma \ref{mlem1.4}, we have $$R_r(\bar{a})\cap R_r(q)=\{0\}, \quad K_r(\bar{a})\cap R_r(p)=\{0\}.$$

Now we need to show $\bar{a}R_r(p)=K_r(q)$. But since $R_r(\bar{a})=K_r(q)$, so we can prove our result by showing that $\bar{a}R_r(p)=R_r(\bar{a})$. Obviously, $\bar{a}R_r(p) \subset R_r(\bar{a})$. On the other hand, since $\bar{a}^{(l)}_{p,\,q}$ exists, then by Lemma $\ref{mlem1.4}$ again, we have $\A=\bar{a}R_r(p) \dotplus R_r(q)$. Now for any $x\in R_r(\bar{a})$, we can write $x=x_1+x_2$ with $x_1 \in \bar{a}R_r(p)$ and $x_2 \in R_r(q)$. From $\bar{a}R_r(p) \subset R_r(\bar{a})$, we get $x_1 \in R_r(\bar{a})$. Thus, $$x_2=x-x_1 \in R_r(\bar{a})\cap R_r(q)=\{0\}.$$ Therefore, $x_2=0$ and then $x=x_1\in \bar{a}R_r(p)$. Hence, $\bar{a}R_r(p)= R_r(\bar{a})=K_r(q)$.

$(2)\Rightarrow (1)$ Since $q \in \A ^\bullet$ and $\bar{a}R_r(p)=K_r(q)$, we can write $\A= K_r(q) \dotplus R_r(q)= \bar{a}R_r(p) \dotplus R_r(q)$. Note that $R_r(\bar{a})\cap R_r(q)=\{0\}$, $K_r(\bar{a})\cap R_r(p)=\{0\}$, then by using Lemma \ref{mlem1.4}, we get $a^{(l)}_{p,\,q}$ exists, then $a^{(2, l)}_{p,\,q}$ also exists and $a^{(2,l)}_{p,\,q}=a^{(l)}_{p,\,q}$. Thus, from Theorem \ref{mthm2.4}, we see $1+a^{(l)}_{p,\,q}\delta a$ is invertible and $\bar{a}^{(l)}_{p,\,q}=a^{(l)}_{p,\,q}(1+ \delta a a^{(l)}_{p,\,q})^{-1}$. Now, by using Lemma \ref{mlem1.4}, we can get $\A=\bar{a}R_r(p) \dotplus R_r(q)$. Similarly, as in $(1)\Rightarrow (2)$, by using $R_r(\bar{a})\cap R_r(q)=\{0\}$, we can show that $\bar{a}R_r(p)=R_r(\bar{a})$ and then $R_r(\bar{a})=K_r(q)$. This completes the proof.
\end{proof}

The first result in the following lemma has been proved for generalized inverse $a^+$ by the second author (see \cite[Proposition 2.5]{Xue2}). By using the same method, we can prove the following results for the general inverse $a^{(l)}_{p,\,q}$.

\begin{lem}[{\rm\cite[Proposition 2.5]{Xue2}}]\label{mlem2.10}
Let $a, \delta a \in \A $ and $p, \,q \in \A ^\bullet$ such that $a^{(l)}_{p,\,q}$ exists. Put $\bar{a}=a+\delta a$.
\begin{enumerate*}
  \item[\rm(1)] If $\delta(R_r(\bar{a}), R_r(a))<\|1-aa^{(l)}_{p,\,q} \|^{-1}$, then $R_r(\bar{a}) \cap R_r(q)=\{0\}$;
  \item[\rm(2)] If $\delta(K_r(\bar{a}), K_r(a))<\|a^{(l)}_{p,\,q} a\|^{-1}$, then $K_r(\bar{a}) \cap R_r(p)=\{0\}$.
\end{enumerate*}
\end{lem}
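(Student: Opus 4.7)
The plan is to mimic the strategy used for $a^+$ in \cite{Xue2}, adapted via the two natural idempotents $e_1:=aa^{(l)}_{p,\,q}$ and $e_2:=a^{(l)}_{p,\,q}a$ associated with $a^{(l)}_{p,\,q}$. The defining identities $aa^{(l)}_{p,\,q}a=a$ and $a^{(l)}_{p,\,q}aa^{(l)}_{p,\,q}=a^{(l)}_{p,\,q}$ make $e_1,e_2\in\A^\bullet$. Using $R_r(a^{(l)}_{p,\,q})=R_r(p)$, $K_r(a^{(l)}_{p,\,q})=R_r(q)$ together with Lemma \ref{mlem1.1}, the first step is to record the range--kernel data $R_r(e_1)=R_r(a)$, $K_r(e_1)=R_r(q)$, $R_r(e_2)=R_r(p)$, $K_r(e_2)=K_r(a)$. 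Each of these is a direct consequence of the four defining equations for $a^{(l)}_{p,\,q}$, and gives in particular that $1-e_1$ is the idempotent projecting onto $R_r(q)$ along $R_r(a)$, while $e_2$ projects onto $R_r(p)$ along $K_r(a)$.

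For (1), having identified $1-e_1$ as above, for any $x\in R_r(q)$ and any $y\in R_r(a)$ we have $x=(1-e_1)x=(1-e_1)(x-y)$, whence $\|x\|\le\|1-aa^{(l)}_{p,\,q}\|\,\D(x,R_r(a))$. Arguing by contradiction, assume a nonzero $x\in R_r(\bar a)\cap R_r(q)$ exists; after rescaling to $\|x\|=1$ and invoking $\D(x,R_r(a))\le\delta(R_r(\bar a),R_r(a))$ from the definition of $\delta$, one obtains $1\le\|1-aa^{(l)}_{p,\,q}\|\cdot\delta(R_r(\bar a),R_r(a))$, contradicting the hypothesis. Part (2) is dual: use $e_2$ itself to get $\|x\|=\|e_2(x-y)\|\le\|a^{(l)}_{p,\,q}a\|\,\D(x,K_r(a))$ for $x\in R_r(p)$ and $y\in K_r(a)$, and then the same contradiction argument, applied to a putative unit vector in $K_r(\bar a)\cap R_r(p)$ with $\D(x,K_r(a))\le\delta(K_r(\bar a),K_r(a))<\|a^{(l)}_{p,\,q}a\|^{-1}$, closes the case.

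The only non-routine step is the range--kernel identification for $e_1$ and $e_2$; once this is in place everything else is elementary metric estimation for idempotent projections in a Banach algebra, so I do not anticipate any substantive obstacle. The argument works uniformly for both parts because the two hypotheses on $\delta$ are precisely calibrated to the norms of the two complementary idempotents naturally produced by $a^{(l)}_{p,\,q}$.
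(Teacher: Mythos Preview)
Your proposal is correct and follows precisely the approach indicated by the paper: the paper does not give an explicit argument but simply remarks that the method of \cite[Proposition~2.5]{Xue2} for $a^+$ carries over verbatim to $a^{(l)}_{p,\,q}$, and that is exactly what you have written out. Your identification of the range--kernel data for the idempotents $e_1=aa^{(l)}_{p,\,q}$ and $e_2=a^{(l)}_{p,\,q}a$ is accurate, and the ensuing contradiction estimates $\|x\|\le\|1-e_1\|\,\D(x,R_r(a))$ and $\|x\|\le\|e_2\|\,\D(x,K_r(a))$ are the standard ones.
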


By using Theorem \ref{mthm2.7}, Theorem \ref{tmthm2.7} and Lemma \ref{mlem2.10}, we have the following:
\begin{cor}\label{lemas1}
Let $a, \delta a \in \A $ and $p, \,q \in \A ^\bullet$ such that $a^{(l)}_{p,\,q}$ exists. Put $\bar{a}=a+\delta a$. If one of the following condition holds,
\begin{enumerate*}
  \item[\rm(i)] $\delta(K_r(\bar{a}), K_r(a))<\|a^{(l)}_{p,\,q} a\|^{-1}$, $\delta(R_r(\bar{a}), R_r(a))<\|1-aa^{(l)}_{p,\,q} \|^{-1}$ and $\bar{a}R_r(p)= K_r(q)$.
  \item[\rm(ii)] $1+a^{(l)}_{p,\,q}\delta a$ is invertible and $\delta(R_r(\bar{a}), R_r(a))<\|1-aa^{(l)}_{p,\,q} \|^{-1}$.
\end{enumerate*}
Then $\bar{a}^{(l)}_{p,\,q}$ exists and $\bar{a}^{(l)}_{p,\,q}=a^{(l)}_{p,\,q}(1+ \delta a a^{(l)}_{p,\,q})^{-1}$.
\end{cor}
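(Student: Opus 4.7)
The plan is to reduce each of the two alternative hypotheses to a setting already handled by the stable-perturbation theorems proved earlier, and then simply read off the conclusion. The unifying observation that I would use throughout is that whenever $a^{(l)}_{p,q}$ exists, Lemma \ref{mlem1.3} and Lemma \ref{mlem1.4} tell us that $a^{(2,l)}_{p,q}$ also exists and coincides with $a^{(l)}_{p,q}$; consequently every assertion in Theorems \ref{mthm2.4}, \ref{mthm2.7} and \ref{tmthm2.7} about $a^{(2,l)}_{p,q}$ or $a^{(l)}_{p,q}$ may be applied interchangeably.

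For case (i), I would first apply part (1) of Lemma \ref{mlem2.10} to the bound $\delta(R_r(\bar a),R_r(a))<\|1-aa^{(l)}_{p,q}\|^{-1}$ and part (2) of the same lemma to the bound $\delta(K_r(\bar a),K_r(a))<\|a^{(l)}_{p,q}a\|^{-1}$. These yield
\[
R_r(\bar a)\cap R_r(q)=\{0\}\quad\text{and}\quad K_r(\bar a)\cap R_r(p)=\{0\}.
\]
Combined with the third assumed equality $\bar a R_r(p)=K_r(q)$, the three clauses of condition (2) in Theorem \ref{tmthm2.7} are now at hand. Invoking the $(2)\Rightarrow(1)$ direction of that theorem then simultaneously supplies the invertibility of $1+a^{(l)}_{p,q}\delta a$, the existence of $\bar a^{(l)}_{p,q}$, and the stated formula $\bar a^{(l)}_{p,q}=a^{(l)}_{p,q}(1+\delta a\, a^{(l)}_{p,q})^{-1}$.

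For case (ii), the assumed invertibility of $1+a^{(l)}_{p,q}\delta a$ plugs directly into Theorem \ref{mthm2.4} (read with $a^{(2,l)}_{p,q}=a^{(l)}_{p,q}$), producing $\bar a^{(2,l)}_{p,q}$ together with the formula $\bar a^{(2,l)}_{p,q}=a^{(l)}_{p,q}(1+\delta a\, a^{(l)}_{p,q})^{-1}$. The gap bound on $R_r(\bar a)$ then, via Lemma \ref{mlem2.10}(1), forces $R_r(\bar a)\cap R_r(q)=\{0\}$, which is condition (2) of Theorem \ref{mthm2.7}. Its $(2)\Rightarrow(1)$ direction promotes $\bar a^{(2,l)}_{p,q}$ to $\bar a^{(l)}_{p,q}$, completing the case.

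I do not anticipate any serious technical obstacle beyond careful bookkeeping. The one delicate point that needs to be flagged explicitly in the write-up is that in case (i) the invertibility of $1+a^{(l)}_{p,q}\delta a$ is \emph{not} assumed; it emerges as part of the conclusion supplied by Theorem \ref{tmthm2.7}. This is precisely why the two alternative hypotheses (i) and (ii) must be handled in separate arguments and cannot be merged into a single statement.
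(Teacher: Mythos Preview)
Your proposal is correct and follows precisely the route the paper intends: the paper states the corollary as an immediate consequence of Theorem \ref{mthm2.7}, Theorem \ref{tmthm2.7} and Lemma \ref{mlem2.10}, and your argument is exactly the natural unpacking of that sentence---Lemma \ref{mlem2.10} converts the gap bounds into the intersection conditions, then Theorem \ref{tmthm2.7} handles case (i) while Theorems \ref{mthm2.4} and \ref{mthm2.7} handle case (ii).
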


Finally, we present some perturbation results for $a^{(2)}_{p,\,q}$.

\begin{thm}\label{mthm2.12}
Let $a,\, \delta a \in \A $ and $p, \,q \in \A ^\bullet$ such that $a^{(2)}_{p,\,q}$ exists. Put $\bar{a}=a+\delta a$. If $1+a^{(2)}_{p,\,q}\delta a$ is invertible. Then the following statements are equivalent:
\begin{enumerate*}
  \item[\rm(1)] $\bar{a}^{(2)}_{p,\,q}$ exists and $\bar{a}^{(2)}_{p,\,q}=a^{(2)}_{p,\,q}(1+\delta a a^{(2)}_{p,\,q} )^{-1}$;
  \item[\rm(2)] $\bar{a}p=(1-q)\bar{a}$;
  \item[\rm(3)]  $\bar{a}a^{(2)}_{p,\,q}=(1-q)\bar{a}a^{(2)}_{p,\,q}$ and $a^{(2)}_{p,\,q}\bar{a}=a^{(2)}_{p,\,q}\bar{a}p$.
 \end{enumerate*}
\end{thm}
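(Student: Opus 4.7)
I will first set $b:=a^{(2)}_{p,q}$ and record the defining relations together with their immediate consequences: $bab=b$, $ba=p$, $ab=1-q$, and hence $pb=b$, $bq=0$, and $b(1-q)=b$. I will also note the automatic auxiliary identity
\[
ap = a(ba) = (ab)a = (1-q)a,
\]
which will let me convert the hypothesis in (2) into a relation on $\delta a$ alone. Guided by Lemma \ref{mlem2.2} and the remark that follows it, the candidate for $\bar a^{(2)}_{p,q}$ will be
\[
w = b(1+\delta a\,b)^{-1} = (1+b\,\delta a)^{-1}b,
\]
whose two forms coincide because $1+b\,\delta a$ and $1+\delta a\,b$ have equivalent invertibility by the usual spectral argument.

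The easiest implication is $(1)\Rightarrow(2)$: if $w=\bar a^{(2)}_{p,q}$ exists, then by definition $w\bar a=p$ and $\bar a w=1-q$, so $\bar a p = \bar a w \bar a = (1-q)\bar a$ immediately.

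For $(2)\Rightarrow(1)$, my strategy is to first extract a clean identity in $\delta a$. Subtracting $ap=(1-q)a$ from the hypothesis $\bar a p=(1-q)\bar a$ gives $\delta a\cdot p=(1-q)\delta a$. Left-multiplying by $b$ and using $b(1-q)=b$ yields $b\,\delta a\cdot p=b\,\delta a$; right-multiplying by $b$ and using $pb=b$ yields $\delta a\,b=(1-q)\,\delta a\,b$. With these two identities in hand I will verify the three defining properties of $\bar a^{(2)}_{p,q}$ directly:
\[
w\bar a=(1+b\,\delta a)^{-1}(ba+b\,\delta a)=(1+b\,\delta a)^{-1}(p+b\,\delta a\,p)=p,
\]
\[
\bar a w=\bigl((1-q)+(1-q)\,\delta a\,b\bigr)(1+\delta a\,b)^{-1}=1-q,
\]
and $w\bar a w=pw=w$ using $pb=b$. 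This simultaneously establishes existence and the stated formula.

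For $(2)\Leftrightarrow(3)$ the passage between the two forms is short bookkeeping with the same four basic identities. Assuming (2), right-multiplying $\bar a p=(1-q)\bar a$ by $b$ and using $pb=b$ gives $\bar a b=(1-q)\bar a b$, while left-multiplying by $b$ and using $b(1-q)=b$ gives $b\bar a=b\bar a p$. Conversely, assuming (3), right-multiplying $\bar a b=(1-q)\bar a b$ by $a$ yields $\bar a p=(1-q)\bar a p$, left-multiplying $b\bar a=b\bar a p$ by $a$ yields $(1-q)\bar a=(1-q)\bar a p$, and combining these two recovers $\bar a p=(1-q)\bar a$. The only mildly delicate step in the whole argument is the extraction of the pair $b\,\delta a=b\,\delta a\,p$ and $\delta a\,b=(1-q)\,\delta a\,b$ from (2); once these are available, every verification collapses into a few lines using the relations $ba=p$, $ab=1-q$, $pb=b$, $bq=0$, and the invertibility of $1+b\,\delta a$.
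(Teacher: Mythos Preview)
Your proof is correct. For the equivalence $(2)\Leftrightarrow(3)$ you argue exactly as the paper does, using the identities $pb=b$ and $b(1-q)=b$ to pass between $\bar a p=(1-q)\bar a$ and the pair of conditions in (3).

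The only substantive difference is in $(1)\Leftrightarrow(2)$: the paper simply cites \cite[Theorem~4.1]{DW1} for this equivalence, whereas you supply a self-contained argument. Your direction $(1)\Rightarrow(2)$ is the obvious one-liner $\bar a p=\bar a w\bar a=(1-q)\bar a$. For $(2)\Rightarrow(1)$ you first reduce (2) to the pair $b\,\delta a=b\,\delta a\,p$ and $\delta a\,b=(1-q)\,\delta a\,b$ (via $ap=(1-q)a$), and then check directly that $w=b(1+\delta a\,b)^{-1}=(1+b\,\delta a)^{-1}b$ satisfies $w\bar a=p$, $\bar a w=1-q$, and $w\bar a w=w$. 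The factorizations $p+b\,\delta a=(1+b\,\delta a)p$ and $(1-q)+\delta a\,b=(1-q)(1+\delta a\,b)$ are exactly what is needed, and your use of the form $w=b(1+\delta a\,b)^{-1}$ to get $pw=w$ from $pb=b$ is clean. This makes your argument independent of the external reference, at the cost of a few extra lines; the paper's version is shorter only because it outsources this step.
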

\begin{proof}
$(1)\Leftrightarrow (2)$ comes from \cite[Theorem 4.1]{DW1}. We show that (2) and (3) are equivalent. If $\bar{a}p=(1-q)\bar{a}$, then $$\bar{a}a^{(2)}_{p,\,q}=\bar{a}pa^{(2)}_{p,\,q}=(1-q)\bar{a}a^{(2)}_{p,\,q}\quad and \quad a^{(2)}_{p,\,q}\bar{a}=a^{(2)}_{p,\,q}(1-q)\bar{a}=a^{(2)}_{p,\,q}\bar{a}p.$$
Conversely, if (3) holds, then $\bar{a}p=\bar{a}a^{(2)}_{p,\,q}a=(1-q)\bar{a}a^{(2)}_{p,\,q}a=aa^{(2)}_{p,\,q}\bar{a}p=(1-q)\bar{a}$.
\end{proof}

\begin{cor}
Let $a,\, \delta a \in \A $ and $p, \,q \in \A ^\bullet$ such that $a^{(2)}_{p,\,q}$ exists. Put $\bar{a}=a+\delta a$. If $1+a^{(2)}_{p,\,q}\delta a$ is invertible and $\delta a=(1-q)\delta a=\delta ap$. Then $\bar{a}^{(2)}_{p,\,q}$ exists and $$\bar{a}^{(2)}_{p,\,q}=a^{(2)}_{p,\,q}(1+\delta a a^{(2)}_{p,\,q} )^{-1}=(1+ a^{(2)}_{p,\,q}\delta a)^{-1}a^{(2)}_{p,\,q}.$$
\end{cor}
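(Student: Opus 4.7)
The plan is to apply Theorem \ref{mthm2.12} directly: the hypothesis that $1+a^{(2)}_{p,q}\delta a$ is invertible is already built into the statement, so it suffices to verify the equivalent condition $\bar{a}p=(1-q)\bar{a}$ from that theorem, and the existence of $\bar{a}^{(2)}_{p,q}$ together with the first formula will follow immediately.

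To check the intertwining relation, I would first record the defining identities of $a^{(2)}_{p,q}$, namely $a^{(2)}_{p,q}a=p$ and $aa^{(2)}_{p,q}=1-q$, and use them to derive $ap=a(a^{(2)}_{p,q}a)=(aa^{(2)}_{p,q})a=(1-q)a$. Combining this with the two standing hypotheses $\delta a=\delta a\,p$ and $\delta a=(1-q)\delta a$, a one-line computation gives
\begin{align*}
\bar{a}p &= ap+\delta a\,p = (1-q)a+\delta a,\\
(1-q)\bar{a} &= (1-q)a+(1-q)\delta a = (1-q)a+\delta a,
\end{align*}
so indeed $\bar{a}p=(1-q)\bar{a}$. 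Theorem \ref{mthm2.12} then yields that $\bar{a}^{(2)}_{p,q}$ exists and equals $a^{(2)}_{p,q}(1+\delta a\,a^{(2)}_{p,q})^{-1}$.

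For the second displayed equality, I would use the elementary identity $a^{(2)}_{p,q}(1+\delta a\,a^{(2)}_{p,q})=(1+a^{(2)}_{p,q}\delta a)a^{(2)}_{p,q}$ obtained by expanding both sides. Since $1+a^{(2)}_{p,q}\delta a$ is invertible by assumption and $1+\delta a\,a^{(2)}_{p,q}$ is invertible by the standard spectral equivalence $\sigma(xy)\cup\{0\}=\sigma(yx)\cup\{0\}$ (the same fact invoked in the proof of Theorem \ref{mthm2.4}), multiplying the identity on the left by $(1+a^{(2)}_{p,q}\delta a)^{-1}$ and on the right by $(1+\delta a\,a^{(2)}_{p,q})^{-1}$ converts it into the claimed formula. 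There is no genuine obstacle here: the argument is a short algebraic verification followed by a direct appeal to Theorem \ref{mthm2.12}, with the symmetrizing hypothesis $\delta a=(1-q)\delta a=\delta a\,p$ serving precisely to make $\delta a$ intertwine $p$ and $1-q$ in the same way $a$ does.
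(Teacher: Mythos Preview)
Your proof is correct and follows essentially the same route as the paper: verify $\bar a p=(1-q)\bar a$ from the hypotheses $\delta a=(1-q)\delta a=\delta a\,p$ together with $ap=(1-q)a$, then invoke Theorem~\ref{mthm2.12}. In fact you are slightly more careful than the paper, which simply says ``Theorem~\ref{mthm2.12} shows that our results hold'' without separately justifying the second displayed equality $(1+a^{(2)}_{p,q}\delta a)^{-1}a^{(2)}_{p,q}=a^{(2)}_{p,q}(1+\delta a\,a^{(2)}_{p,q})^{-1}$ that you derive from the intertwining identity and spectral equivalence.
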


\begin{proof}
If $\delta a=(1-q)\delta a=\delta ap$, then it is easy to check that $\bar{a}p=(1-q)\bar{a}$. Thus, Theorem \ref{mthm2.12} shows that our results hold.
\end{proof}

\section{Perturbation analysis for the $(p, q)$--generalized inverses}

In this section, we mainly investigate the general perturbations problem for the $(p, q)$--generalized inverses
$a^{(2,l)}_{p,\,q}$ and $a^{(1,2)}_{p,\,q}$. Let $\kappa=\|a\|\|a^{(2,l)}_{p,\,q}\|$, which is the generalized
condition number of the generalized inverse $a^{(2,l)}_{p,\,q}$.

\begin{lem}\label{mlem3.1}
Let $a \in \mathscr{A}$ and $p \in \mathscr{A}^\bullet$ with $R_r(p)=R_r(a)$. Let $c \in \mathscr{A}$  with $R_r(c)$
closed and $\hat{\delta}(R_r(c),\, R_r(a))<\dfrac{1}{1+\|p\|}$. Then $\mathscr{A}=R_r(c)\dotplus  K_r(p)$.
\end{lem}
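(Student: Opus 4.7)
The plan is to prove the decomposition in two pieces: first that $R_r(c) \cap K_r(p) = \{0\}$, and then that every element of $\mathscr{A}$ can be written as a sum from $R_r(c)$ and $K_r(p)$. Throughout, I will use the standard identification $\mathscr{A} = R_r(p) \dotplus K_r(p)$ coming from the idempotent $p$, together with the hypothesis $R_r(a) = R_r(p)$, which allows the gap condition to be read as $\hat\delta(R_r(c), R_r(p)) < \frac{1}{1+\|p\|}$.

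For the intersection being trivial, I would take a nonzero $y \in R_r(c) \cap K_r(p)$ and normalise $\|y\| = 1$. Using $\delta(R_r(c), R_r(p)) < \frac{1}{1+\|p\|}$, I pick $z \in R_r(p)$ with $\|y - z\| < \frac{1}{1+\|p\|}$. Since $py = 0$ and $pz = z$, the identity $z = -p(y-z)$ gives $\|z\| \le \frac{\|p\|}{1+\|p\|}$, and then the triangle inequality forces $\|y\| < 1$, a contradiction.

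For the spanning part, the crux is to show that left multiplication by $p$ sends $R_r(c)$ onto $R_r(p)$. I would argue by a Neumann-type iteration: given $x \in R_r(p)$, use $\delta(R_r(p), R_r(c)) < \frac{1}{1+\|p\|}$ to choose $y_1 \in R_r(c)$ with $\|x - y_1\| \le \beta \|x\|$ for a fixed $\beta$ strictly between $\hat\delta(R_r(c), R_r(p))$ and $\frac{1}{1+\|p\|}$. Setting $x_1 = p(x - y_1) = x - p y_1 \in R_r(p)$ and iterating, the residuals contract as $\|x_n\| \le \alpha \|x_{n-1}\|$ with $\alpha = \|p\|\beta < 1$, while $\|y_n\| \le (1+\beta)\alpha^{n-1}\|x\|$. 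Hence $y = \sum_{n\ge 1} y_n$ converges absolutely in the Banach algebra $\mathscr{A}$, lies in the closed subspace $R_r(c)$, and satisfies $py = x$ by telescoping $py_n = x_{n-1} - x_n$. Given arbitrary $u \in \mathscr{A}$, I decompose $u = pu + (1-p)u$, choose $y \in R_r(c)$ with $py = pu$, and write $u = y + (u - y)$ with $u - y \in K_r(p)$, completing the proof.

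The main obstacle is the iteration in the surjectivity step: it must be arranged so that the new residual stays in $R_r(p)$ (handled automatically by the factorisation $x_n = p(x_{n-1} - y_n)$) and so that the geometric factor $\|p\|\beta$ is strictly less than $1$. The choice $\frac{1}{1+\|p\|}$ in the hypothesis is calibrated precisely so that both the injectivity estimate and the contraction estimate work simultaneously, with no room to spare.
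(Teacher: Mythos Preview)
Your argument is correct. Both the trivial-intersection step and the Neumann-iteration surjectivity step work as written; in particular, the choice of $\beta$ strictly between $\hat\delta(R_r(c),R_r(p))$ and $\frac{1}{1+\|p\|}$ guarantees the contraction factor $\alpha=\|p\|\beta<1$, and the closedness of $R_r(c)$ ensures the limit $y=\sum y_n$ remains in $R_r(c)$.

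The paper, however, does not argue directly. It simply views the left multiplication $L_p\colon x\mapsto px$ as an idempotent bounded operator on the Banach space $\mathscr{A}$ with range $R_r(p)$ and norm $\|p\|$, and then invokes a standard perturbation result for complemented subspaces (cited from M\"uller's book and Xue's monograph) stating that any closed subspace within gap $\frac{1}{1+\|P\|}$ of the range of a bounded projection $P$ is a complement to $\ker P$. In effect, you have supplied a self-contained proof of precisely that cited lemma, specialised to this setting. Your approach is more elementary and makes the paper independent of the external references at this point; the paper's approach is shorter and situates the result in the broader literature on gap-stable complements.
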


\begin{proof}
Let $L_p\,x=p\,x$, $\forall\,x\in\A$. Then $L_p$ is an idempotent operator on $\A$ with $\|L_p\|=\|p\|$ and
$\{L_p\,x\vert\,x\in\A\}=R_r(p)$. By \cite[Theorem 11]{VM} or \cite[Lemma 4.4.4]{Xue1}, $\hat{\delta}(R_r(c),\, R_r(a))<
\dfrac{1}{1+\|L_p\|}$ implies that $\A=R_r(c)\dotplus  K_r(p)$.
\end{proof}

\begin{lem}[{\cite[Lemma 2.4]{Xue2}}]\label{mlem3.2}
For any $p,\, q \in \mathscr{A}^\bullet$ we have $\hat{\delta}(R_r(p), R_r(q))\leq \|p-q\|$.
\end{lem}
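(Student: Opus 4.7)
The plan is to estimate each one-sided gap $\delta(R_r(p),R_r(q))$ and $\delta(R_r(q),R_r(p))$ directly from the definition, using the fact that elements of $R_r(p)$ are fixed by multiplication by $p$, so that $q$ multiplied against them yields a convenient competitor in $R_r(q)$.

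First I would take an arbitrary $x\in R_r(p)$ with $\|x\|=1$. Since $p\in\A^\bullet$ and $R_r(p)=\{py\mid y\in\A\}=pR_r(p)$, Lemma \ref{mlem1.1}(2) (or just direct computation with $p^2=p$) gives $px=x$. Now $qx\in R_r(q)$ is a natural candidate for approximation, and
\[
\mathrm{dist}(x,R_r(q))\le\|x-qx\|=\|px-qx\|=\|(p-q)x\|\le\|p-q\|.
\]
Taking the supremum over all such $x$ yields $\delta(R_r(p),R_r(q))\le\|p-q\|$.

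The situation is completely symmetric in $p$ and $q$: swapping their roles, the same argument with $y=qy$ for $y\in R_r(q)$, $\|y\|=1$, and the competitor $py\in R_r(p)$ gives $\delta(R_r(q),R_r(p))\le\|q-p\|=\|p-q\|$. Combining the two inequalities via the definition $\hat\delta(M,N)=\max\{\delta(M,N),\delta(N,M)\}$ yields the claim.

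There is essentially no obstacle here; the only subtlety is the (trivial) degenerate case $R_r(p)=\{0\}$ or $R_r(q)=\{0\}$, where $\delta$ is defined to be $0$ and the inequality holds vacuously. No hypothesis about closedness or about $\A$ being more than a unital Banach algebra is needed, since $R_r(p)$ and $R_r(q)$ are automatically closed by Lemma \ref{mlem1.1}(1).
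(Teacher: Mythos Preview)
Your proof is correct and is the standard argument for this estimate. Note, however, that the paper does not actually prove this lemma: it is quoted as \cite[Lemma 2.4]{Xue2} and stated without proof, so there is no in-paper argument to compare against. Your direct estimate via $\mathrm{dist}(x,R_r(q))\le\|x-qx\|=\|(p-q)x\|$ for $x=px$ is exactly the natural route and would be the expected proof in the cited reference as well.
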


\begin{lem}\label{mlem3.3}
Let $a \in \mathscr{A}$ and $p,\, q \in \mathscr{A}^\bullet$ such that $a^{(2,l)}_{p,\,q}$ exists. Suppose that $p^\prime \in\mathscr{A}^\bullet$ satisfying $\|p-p^\prime\|< \dfrac{1}{1+\kappa}$. Then
\begin{enumerate*}
\item[$(1)$] $\hat{\delta}(aR_r(p), aR_r(p^\prime))\leq \dfrac{\kappa\|p-p^\prime\|}{1-(1+\kappa)\|p-p^\prime\|}$;
\item[$(2)$] $aR_r(p^\prime) \subset \A$ is closed and $K_r(a) \cap R_r(p^\prime)=\{0\}$.
\end{enumerate*}
\end{lem}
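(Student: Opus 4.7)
The plan is to exploit the fact that $b := a^{(2,l)}_{p,q}$ acts as a left inverse of $a$ on $R_r(p)$, and then to transport this lower bound to $R_r(p')$ by a quantitative perturbation estimate using the hypothesis $\|p - p'\| < (1+\kappa)^{-1}$. From Lemma~\ref{mlem1.3}\,(4) we have $bap = p$, so for every $v \in R_r(p)$ the identity $v = pv$ yields $bav = (bap)v = pv = v$, giving the basic bound $\|v\| \leq \|b\|\,\|av\|$ on $R_r(p)$. In particular $a\colon R_r(p)\to aR_r(p)$ is bounded below.

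For part (2), the main work is to transplant this bound to $R_r(p')$. Given $v \in R_r(p')$, set $w = pv \in R_r(p)$. Since $v = p'v$, one has $v - w = (p' - p)v$ and hence $\|v - w\| \leq \|p-p'\|\,\|v\|$. Applying the basic bound to $w$ and using $aw = av - a(v - w)$ yields $\|w\| \leq \|b\|(\|av\| + \|a\|\,\|p-p'\|\,\|v\|)$; combined with $\|v\| \leq \|w\| + \|p-p'\|\,\|v\|$ this rearranges to
\[
\|v\| \leq \frac{\|b\|}{1 - (1+\kappa)\|p-p'\|}\,\|av\|, \qquad v \in R_r(p'),
\]
where the denominator is positive by assumption. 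Setting $av = 0$ forces $v = 0$, so $K_r(a) \cap R_r(p') = \{0\}$; and since $a|_{R_r(p')}$ is bounded below, its image $aR_r(p')$ is closed.

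For part (1), the two directional estimates for the gap follow by running the same scheme with explicit approximants. For $\delta(aR_r(p), aR_r(p'))$, take a unit $y = ax \in aR_r(p)$ with $x \in R_r(p)$; then $x = by$ gives $\|x\| \leq \|b\|$, and $y' := ap'x \in aR_r(p')$ satisfies $y - y' = a(p-p')x$ with norm at most $\kappa\|p-p'\|$. For $\delta(aR_r(p'), aR_r(p))$, take a unit $y' = av \in aR_r(p')$, bound $\|v\|$ using part (2), and approximate by $y := apv \in aR_r(p)$; the difference $y' - y = a(p' - p)v$ has norm at most $\kappa\|p-p'\|/(1 - (1+\kappa)\|p-p'\|)$. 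Taking the maximum of the two directional bounds yields the stated estimate on $\hat\delta$.

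The main obstacle is the Neumann-type lower bound on $a|_{R_r(p')}$ in part (2): the natural left inverse $b$ is adapted only to $R_r(p)$, and obtaining the quantitative control required to pass to the perturbed invariant subspace is what forces the denominator $1-(1+\kappa)\|p-p'\|$ to appear throughout. Once that estimate is in place, both the closedness statement and the gap bounds reduce to choosing the obvious approximants $pv$ and $p'x$.
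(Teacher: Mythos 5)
Your proof is correct and follows essentially the same route as the paper: the key lower bound $\|v\|\le \|b\|\,\|av\|/\bigl(1-(1+\kappa)\|p-p'\|\bigr)$ on $R_r(p')$ is exactly the paper's inequality \eqref{eq2.3}, and your two directional estimates coincide with \eqref{eq2.4} and \eqref{eq2.5}. The only cosmetic difference is that you use the explicit approximants $pv$ and $p'x$ in place of the paper's appeal to the gap bound $\hat\delta(R_r(p),R_r(p'))\le\|p-p'\|$ of Lemma \ref{mlem3.2}, which changes nothing of substance.
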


\begin{proof}
(1) Set $b=a^{(2,l)}_{p,\,q}$. For any $t^\prime \in R_r(p^\prime)$, we have
\begin{align*}
{\text{dist}}(at^\prime,aR_r(p))&=\inf_{t\in R_r(p)}\|at^\prime-at\|\leq \|a\|\inf_{t\in R_r(p)}\|t-t^\prime\| \\&\leq \|a\|{\text{dist}}(t^\prime, R_r(p) )\\&\leq \|a\|\|t'\|\hat{\delta}(R_r(p^\prime),R_r(p)).
\end{align*}
Thus, we get
\begin{align}\label{eq2.2}
\delta(at^\prime, aR_r(p))\leq \|a\|\|t'\|\hat{\delta}(R_r(p^\prime),R_r(p)).
\end{align}

But for any $t^\prime \in R_r(p^\prime)$ and $t\in R_r(p)$, we have
\begin{align*}
\|b\|\|at^\prime\|&=\|b\|\|a(t^\prime-t+t)\|\geq \|b\|\|at\|-\|b\|\|a\|\|t^\prime-t\| \\
&\geq \|t\|-\|b\|\|a\|\|t^\prime-t\| \\
&\geq \|t^\prime\|-(1 +\|b\|\|a\|)\|t^\prime-t\|.
\end{align*}
Thus, $\|t^\prime\|- \|b\|\|at^\prime\| \leq (1 +\|b\|\|a\|)\|t^\prime-t\|,$ and then
\begin{align*}
\|t^\prime\|- \|b\|\|at^\prime\| & \leq (1 +\|b\|\|a\|){\text{dist}}(t^\prime, R_r(p))\\&\leq  (1 +\|b\|\|a\|)\|t'\|\hat{\delta}(R_r(p^\prime),R_r(p)).
\end{align*}

Therefore, we have
\begin{align}\label{eq2.3}
\|t'\| \leq \frac{\|b\|\|at^\prime\|}{1-(1 +\|b\|\|a\|)\hat{\delta}(R_r(p^\prime),R_r(p))}.
\end{align}

Then by Eq. \eqref{eq2.2} and Eq. \eqref{eq2.3}, we get
\begin{align*}
\delta(at^\prime, aR_r(p)) &\leq \frac{\|b\|\|a\| \|at^\prime\| \hat{\delta}(R_r(p^\prime),R_r(p))}{1-(1 +\|b\|\|a\|)\hat{\delta}(R_r(p^\prime),R_r(p))}.
\end{align*}

Now, by Lemma \ref{mlem3.2} and the Definition of gap--function, we have
\begin{align}\label{eq2.4}
\delta(aR_r(p^\prime), aR_r(p)) &\leq \frac{\kappa\|p-p^\prime\|}{1-(1 +\kappa)\|p-p^\prime\|}.
\end{align}

On the other hand, for any $t\in R_r(p)$, by Lemma \ref{mlem1.3}, we have $t=pt=bapt=bat,$ then
\begin{align*}
\D(at, aR_r(p^\prime))&=\inf_{s\in R_r(p^\prime)}\|at-as\| \leq \|a\|\inf_{s\in R_r(p^\prime)}\|t-s\| \\
&=\|a\|{\text{dist}}(t,R_r(p^\prime))\leq\|a\|\|t\|\delta(R_r(p),R_r(p^\prime))\\
&=\|a\|\|bat\|\delta(R_r(p),R_r(p^\prime))
\\&\leq\|a\|\|b\|\|at\|\delta(R_r(p),R_r(p^\prime)).
\end{align*}
Thus we have $\delta(aR_r(p), aR_r(p^\prime))\leq \kappa \hat{\delta}(R_r(p), R_r(p^\prime))$. So by Lemma \ref{mlem3.2},
\begin{align}\label{eq2.5}
\delta(aR_r(p), aR_r(p^\prime))\leq \kappa\|p-p^\prime\|.
\end{align}

Consequently, from Eq. \eqref{eq2.4} and Eq. \eqref{eq2.5},  we have
\begin{align*}
\hat{\delta}(aR_r(p), aR_r(p^\prime))&=\max\{\delta( aR_r(p^\prime), aR_r(p)), \delta(aR_r(p), aR_r(p^\prime))\}
\\&\leq\dfrac{\kappa\|p-p^\prime\|}{1-(1 +\kappa)\|p-p^\prime\|}.
\end{align*}

(2) Obviously, by Eq. \eqref{eq2.3}, we get $aR_r(p^\prime) \subset \A$ is closed and $K_r(a) \cap R_r(p^\prime)=\{0\}.$ This completes the proof.
\end{proof}

Now we can give the following perturbation result for $a^{(2,l)}_{p,\,q}$ when $p$ has a small perturbation.

\begin{thm}\label{mthm3.4}
Let $a\in \mathscr{A}$ and $p,\, q \in \mathscr{A}^\bullet$ such that $a^{(2,l)}_{p,\,q}$ exists. Suppose that
$p^\prime \in\mathscr{A}^\bullet$ with $\|p-p^\prime\|< \dfrac{1}{(1+\kappa)^2}$. Then $a_{p^\prime,q}^{(2,l)}$ exists
and
$$
\frac{\|a^{(2,l)}_{p^\prime, q} - a^{(2,l)}_{p,\,q}\|}{\| a^{(2,l)}_{p,\,q}\|}
\leq \dfrac{(1+ \kappa )\|p^\prime -p\|}{1-(1+ \kappa )\|p^\prime -p\|}\ \text{and}\
\|a^{(2,l)}_{p^\prime, q}\|\leq\frac{\|a^{(2,l)}_{p,\,q}\|}{1-(1+ \kappa )\|p^\prime -p\|}.
$$
\end{thm}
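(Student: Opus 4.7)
First I would establish existence of $a^{(2,l)}_{p',q}$ by verifying Lemma~\ref{mlem1.3}(3). The kernel condition $K_r(a)\cap R_r(p') = \{0\}$ is immediate from Lemma~\ref{mlem3.3}(2). For the decomposition $\A = aR_r(p')\dotplus R_r(q)$, the natural decomposing idempotent for $\A = aR_r(p)\dotplus R_r(q)$ is $e := aa^{(2,l)}_{p,q}$: it is idempotent by $bab = b$, has $R_r(e) = aR_r(p)$ and $K_r(e) = R_r(q)$ via Lemma~\ref{mlem1.3}(4), and satisfies $\|e\| \le \kappa$. Applying Lemma~\ref{mlem3.1} to $e$, the decomposition follows once $\hat{\delta}(aR_r(p'),aR_r(p)) < (1+\|e\|)^{-1}$, which reduces via Lemma~\ref{mlem3.3}(1) to $(1+\kappa)^2\|p-p'\| < 1$, precisely the hypothesis.

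Next I would produce a closed form. Set $b := a^{(2,l)}_{p,q}$ and $V := 1 + a(p'-p)b$. The bound $\|V-1\| \le \kappa\|p-p'\| < 1$ makes $V$ invertible with $\|V^{-1}\| \le (1-\kappa\|p-p'\|)^{-1}$. I claim $b' := p'bV^{-1}$ equals $a^{(2,l)}_{p',q}$. The key identity is $Vab = ab + a(p'-p)(bab) = ab + a(p'-p)b = ap'b$ (using $pb = b$), which yields $b'ab' = p'bV^{-1}\cdot Vab\cdot V^{-1} = p'(bab)V^{-1} = b'$. For $K_r(b') = R_r(q)$, note that $V$ fixes $R_r(q)$ pointwise (since $bx = 0$ on $R_r(q)$), and $\|p-p'\| < 1$ forces $R_r(p)\cap K_r(p') = \{0\}$; tracing $p'bV^{-1}x = 0$ through these identifies the kernel. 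For $R_r(b') = R_r(p')$, which only manifestly gives $R_r(b') = p'R_r(p) \subset R_r(p')$, I would use the element $c' := a^{(2,l)}_{p',q}$ furnished by the existence step: $ab'$ is an idempotent with $K_r(ab') = R_r(q)$ (using $K_r(a)\cap R_r(p') = \{0\}$) and $R_r(ab') \subset aR_r(p')$, so uniqueness of the complementary subspace in $aR_r(p')\dotplus R_r(q)$ forces $R_r(ab') = aR_r(p') = R_r(ac')$; thus $ab' = ac'$ and $b' - c' \in K_r(a)\cap R_r(p') = \{0\}$.

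Finally, using $pb = b$ I rewrite
\[
b' - b = (p'b - bV)V^{-1} = \bigl((p'-p)b - ba(p'-p)b\bigr)V^{-1} = (1-ba)(p'-p)bV^{-1}.
\]
Since $ba$ is idempotent with $\|ba\| \le \kappa$, $\|1-ba\| \le 1+\kappa$, and combining with the Neumann bound on $V^{-1}$ gives $\|b'-b\|/\|b\| \le (1+\kappa)\|p-p'\|/(1-\kappa\|p-p'\|)$, which implies the stated bound since $1-\kappa\|p-p'\| \ge 1-(1+\kappa)\|p-p'\|$. The bound on $\|b'\|$ follows from $\|b'\| \le \|b\| + \|b'-b\|$ together with the elementary inequality $(1+\|p-p'\|)(1-(1+\kappa)\|p-p'\|) \le 1-\kappa\|p-p'\|$. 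I expect the main obstacle to be the range identification in Step~2: only after using the existence from Step~1 can one upgrade $R_r(b') \subset R_r(p')$ to equality; the remainder is bookkeeping with Neumann series.
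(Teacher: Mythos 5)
Your proposal is correct, and its existence step is exactly the paper's: Lemma \ref{mlem3.3} gives the closedness of $aR_r(p')$, $K_r(a)\cap R_r(p')=\{0\}$, and the gap bound, which under $\|p-p'\|<(1+\kappa)^{-2}$ is $<\frac{1}{1+\|ab\|}$, so Lemma \ref{mlem3.1} applied to the idempotent $e=ab$ (whose range is $aR_r(p)$ and kernel $R_r(q)$) yields $\A=aR_r(p')\dotplus R_r(q)$ and hence existence via Lemma \ref{mlem1.3}. Where you genuinely diverge is the error estimate. The paper argues with the gap function directly: it decomposes an arbitrary $x=ab'z+t'$ along $aR_r(p')\dotplus R_r(q)$, picks an $\epsilon$-almost-minimizing $by$ approximating $b'z$ in $R_r(p)$, and rearranges the resulting inequality $\|(b'-b)x\|\le(1+\kappa)(\|b\|\|x\|+\|(b'-b)x\|)\|p'-p\|+(1+\kappa)\epsilon$; no closed form for $a^{(2,l)}_{p',q}$ appears. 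You instead produce the explicit representation $a^{(2,l)}_{p',q}=p'\,a^{(2,l)}_{p,q}\,\bigl(1+a(p'-p)a^{(2,l)}_{p,q}\bigr)^{-1}$, verify it via $Vab=ap'b$, the pointwise fixing of $R_r(q)$ by $V$, the fact that $\|p-p'\|<1$ forces $R_r(p)\cap K_r(p')=\{0\}$, and the complement-uniqueness argument identifying $R_r(ab')=aR_r(p')$ (this last step does need the existence part first, as you note, since $p'R_r(p)\subset R_r(p')$ alone is not enough); all of these checks go through. The exact identity $b'-b=(1-ba)(p'-p)bV^{-1}$ then gives $\|b'-b\|/\|b\|\le(1+\kappa)\|p-p'\|/(1-\kappa\|p-p'\|)$ and $\|b'\|\le(1+\|p-p'\|)\|b\|/(1-\kappa\|p-p'\|)$, which are slightly sharper than the stated bounds and imply them by the elementary comparisons you record. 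What your route buys is an explicit perturbation formula (in the spirit of Theorem \ref{mthm3.6}(2), which the paper only supplies for perturbations of $q$) and marginally better constants, at the cost of the Neumann-series bookkeeping; the paper's gap argument is formula-free and transfers more directly to situations where no such resolvent expression is available.
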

\begin{proof}
Let $b= a^{(2,l)}_{p,\,q}$, then by Definition \ref{mdef1.2}, we know that $ ab \in \Ad$ and $\kappa=\|b\|\|a\|\geq\|ab\|$. By using Lemma \ref{mlem3.3} and note that $\|p-p^\prime\|< \dfrac{1}{(1+\kappa)^2}$, we have
$$\hat{\delta}(aR_r(p), aR_r(p^\prime))\leq \dfrac{\kappa\|p-p^\prime\|}{1-(1+\kappa)\|p-p^\prime\|}< \frac{1}{1+\|ab\|}.$$ From Lemma \ref{mlem3.3}\,(2), we know that $K_r(a) \cap R_r(p^\prime)=\{0\}$ and $aR_r(p^\prime) \subset \A$ is closed.
Thus, by Lemma \ref{mlem3.1}, $aR_r(p^\prime)$ is complemented and $\mathscr{A}=aR_r(p^\prime) \dotplus R_r(q)$. Therefore, by Lemma \ref{mlem1.3}, we know $a^{(2)}_{p^\prime,\,q}$ exists.

For convenience we write $a^{(2,l)}_{p^\prime, q}=b^\prime$. Since we have proved that $a^{(2,l)}_{p^\prime, q}$ exists, then by Theorem \ref{mlem1.3},\,$\mathscr{A}=aR_r(p^\prime) \dotplus R_r(q)=aR_r(p^\prime) \dotplus K_r(b^\prime)$ and $K_r(b^\prime)=R_r(q)=K_r(b)$.  Thus for any $x\in \mathscr{A}$, we can write $x=t+ t^\prime$ with $t=ab^\prime z$ for some $z\in \mathscr{A}$ and $t^\prime\in R_r(q)$.

Since {\text{dist}}$(b^\prime z, R_r(p))\leq\|b^\prime z\|\delta(R_r(p^\prime),R_r(p))\leq |b^\prime z\|\|p^\prime -p\|$, then for every $\epsilon>0$, we can choose $y\in \mathscr{A}$ such that $\|b^\prime z-by \|<\|b^\prime z\|\|p^\prime -p\|+\epsilon.$ Put $s=aby$. Then we have
\begin{align}\label{eqprop2.5s}
\|t-s\|&=\|ab^\prime z - aby\|<\|a\|\|b^\prime z\||p^\prime -p\|+ \|a\|\epsilon.\notag \\
\|(b^\prime - b)x\|&=\|(b^\prime - b)(t+ t^\prime)\|=\|(b^\prime - b)t\|\notag \\
&\leq \|b^\prime ab^\prime  t- bab s\|+\| bs-bt\|\notag \\
&\leq \|b^\prime z- by\| + \|b\|\|s-t\|\notag \\
& \leq(1+ \kappa )\|b^\prime z\|\|p^\prime -p\|+(1+\kappa)\epsilon.
\end{align}
From $t=ab^\prime z$, we get $b^\prime t=b^\prime z$ and therefore
\begin{equation}\label{eqprop2.6s}
\|b^\prime z\|=\|b^\prime t\|=\|(b^\prime - b)x +bx\|\leq \|(b^\prime - b)x\|+\|b\|\|x\|.
\end{equation}
Thus by using Eq. \eqref{eqprop2.5s} and Eq. \eqref{eqprop2.6s}, we get
\begin{equation}\label{eqprop2.7s}
\|(b^\prime - b)x\|\leq (1+ \kappa )(\|b\|\|x\| +\|(b^\prime - b)x\|)\|p^\prime -p\|+(1+\kappa)\epsilon.
\end{equation}
Letting $\epsilon\to 0^+$ in Eq. \eqref{eqprop2.7s}, we can get
$$
\frac{\|a^{(2,l)}_{p^\prime, q} - a^{(2,l)}_{p,\,q}\|}{\| a^{(2,l)}_{p,\,q}\|}
\leq \dfrac{(1+ \kappa )\|p^\prime -p\|}{1-(1+ \kappa )\|p^\prime -p\|},\quad
\|a^{(2,l)}_{p^\prime, q}\|\leq\frac{\|a^{(2,l)}_{p,\,q}\|}{1-(1+ \kappa )\|p^\prime -p\|}.
$$
This completes the proof.
\end{proof}

Some representations for the generalized inverse $a^{(2,l)}_{p,q}$ have been presented in \cite{CX1}. The following
result gives a representation of $a^{(2,l)}_{p,q}$ based on (1,5) inverse. Note that this result is also an improvement
of the group inverses representation of $a^{(2,l)}_{p,q}$(see \cite{CX1}), which removes the existence of the group
inverses of $wa$ or $aw$.

\begin{lem}[{\cite[Theorem 5.6]{CX1}}]\label{mlem3.5}
Let $a, w \in \mathscr{A}$ and $p,\; q \in \mathscr{A}^\bullet$ such that $R_r(w) = R_r(p)$ and $K_r(w) = R_r(q)$. Then
the following statements are equivalent:
\begin{enumerate*}
\item [\rm(1)] $a^{(2,l)}_{p,q}$ exists;
\item [\rm(2)] $(aw)^{(1,5)}$ exists and $K_r(a) \cap R_r(w)= \{0\};$
\item [\rm(3)] $(wa)^{(1,5)}$ exists and $R_r(w)=R_r(wa).$
\end{enumerate*}
In this case, $waw$ is inner regular and $$a^{(2,l)}_{p,\,q}=(wa)^{(1,5)} w=w(aw)^{(1,5)}=w(waw)^{-}w.$$
\end{lem}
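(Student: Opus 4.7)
The plan is to translate the existence of $a^{(2,l)}_{p,q}$ into purely internal conditions on $aw$ (for (2)) and on $wa$ (for (3)), using Lemma \ref{mlem1.3} and Proposition \ref{mprop2.3} together with the assumption that $w$ encodes the idempotents $p$ and $q$ through $R_r(w)=R_r(p)$ and $K_r(w)=R_r(q)$. The first move is to rewrite Lemma \ref{mlem1.3}(3) using these two identities: since $aR_r(p)=aR_r(w)=R_r(aw)$ and $R_r(q)=K_r(w)$, statement (1) is equivalent to
\[
K_r(a)\cap R_r(w)=\{0\}\quad\text{and}\quad \A=R_r(aw)\dotplus K_r(w).
\]

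For the equivalence (1) $\Leftrightarrow$ (2), the key observation is the identity $K_r(aw)=K_r(w)$, which holds precisely under the side condition $K_r(a)\cap R_r(w)=\{0\}$: the inclusion $K_r(w)\subset K_r(aw)$ is trivial, while $awx=0$ forces $wx\in K_r(a)\cap R_r(w)=\{0\}$. Once this identity is in hand, the decomposition in the display becomes $\A=R_r(aw)\dotplus K_r(aw)$, which is the standard criterion for $aw$ to admit a $(1,5)$-inverse (a group-type inverse in the Banach algebra sense). The reverse direction runs the same argument backwards. For (1) $\Leftrightarrow$ (3), I would invoke Proposition \ref{mprop2.3}(3) in place of Lemma \ref{mlem1.3}(3) and apply the mirror identity on the left: the hypothesis $R_r(w)=R_r(wa)$ is exactly what is needed to convert $\A=R_l(1-q)a\dotplus R_l(1-p)$ into a direct sum decomposition adapted to $wa$, and to recover the left kernel condition $K_l(a)\cap R_l(1-q)=\{0\}$.

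For the explicit representations, I would set $c=(aw)^{(1,5)}$, $b=wc$, and verify directly that $b$ satisfies the three defining properties of $a^{(2,l)}_{p,q}$ in Definition \ref{mdef1.2}: $bab=b$ follows by combining $awcaw=aw$ with the commuting relation $awc=caw$; $R_r(b)\subset R_r(w)$ is trivial, and the reverse inclusion is extracted by lifting elements of $R_r(w)$ through the splitting $\A=R_r(aw)\dotplus K_r(aw)$; and $K_r(b)=K_r(w)=R_r(q)$ follows from the kernel identity above. The formula $(wa)^{(1,5)}w=a^{(2,l)}_{p,q}$ is then immediate from the uniqueness clause of Definition \ref{mdef1.2}, once the analogous three properties are checked. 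Inner regularity of $waw$ together with $a^{(2,l)}_{p,q}=w(waw)^- w$ is obtained by exhibiting an explicit inner inverse of $waw$, for example $(aw)^{(1,5)}a(wa)^{(1,5)}$, and applying uniqueness to absorb the ambiguity in the choice of inner inverse. The main obstacle I anticipate is the reverse inclusion $R_r(w)\subset R_r(wc)$: while most of the identities reduce to routine bookkeeping with the defining relations of the $(1,5)$-inverse, this step genuinely requires the group-type property of $c$ combined with the decomposition $\A=R_r(aw)\dotplus K_r(aw)$ in order to pull elements of $R_r(w)$ back through $aw$ and then forward through $c$.
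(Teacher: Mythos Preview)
The paper does not prove this lemma here; it is quoted verbatim from the companion paper \cite{CX1} (Theorem~5.6 there) and carries no argument in the present text. So there is no in-paper proof against which to compare your plan.

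On the plan itself: your route for $(1)\Leftrightarrow(2)$ via Lemma~\ref{mlem1.3}(3) and the kernel identity $K_r(aw)=K_r(w)$ (valid exactly under $K_r(a)\cap R_r(w)=\{0\}$) is correct and is the natural argument. Your treatment of the representation formulas is also sound.

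There is, however, a genuine gap in your plan for $(1)\Leftrightarrow(3)$. You propose to invoke Proposition~\ref{mprop2.3}(3), which is stated in terms of the \emph{left} ideals $R_l(1-q)$ and $R_l(1-p)$. To run the mirror of your $(1)\Leftrightarrow(2)$ argument you would need $R_l(w)=R_l(1-q)$ and $K_l(w)=R_l(1-p)$, but the hypotheses fix only $R_r(w)$ and $K_r(w)$; the left range and left kernel of $w$ are not determined by this right-sided data, so Proposition~\ref{mprop2.3} does not apply to $w$. The equivalence $(1)\Leftrightarrow(3)$ should instead be handled entirely on the right, again through Lemma~\ref{mlem1.3}(3). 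The two translations you need are: (i) $R_r(w)=R_r(wa)$ is equivalent to $\A=R_r(a)+R_r(q)$, because $wz=wax$ for some $x$ iff $z-ax\in K_r(w)=R_r(q)$; and (ii) $K_r(wa)=\{x:ax\in R_r(q)\}$. With these in hand, the decomposition $\A=R_r(wa)\dotplus K_r(wa)$ together with $R_r(wa)=R_r(p)$ yields $K_r(a)\cap R_r(p)=\{0\}$ and $\A=aR_r(p)\dotplus R_r(q)$ by a short direct-sum chase (and conversely), so Lemma~\ref{mlem1.3}(3) closes the equivalence without any appeal to left-sided information about $w$.
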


Now we give the result when $q$ has a small perturbation. By using our above lemma \ref{mlem3.5}, we can also give a new representation for the generalized inverse of the perturbed operator.

\begin{thm}\label{mthm3.6}
Let $a\in \mathscr{A}$ and $p, \,q \in \mathscr{A}^\bullet$ such that $a^{(2,l)}_{p,\,q}$ exists. Suppose that $q^\prime \in \Ad$ with $\|q-q^\prime\|< \dfrac{1}{2+\kappa}$. Then $a_{p,\,q\prime}^{(2,l)}$ exists and
\begin{enumerate*}
\item[$(1)$] $\dfrac{\|(a^{(2,l)}_{p,\,q^\prime}-a^{(2,l)}_{p,\,q})\|}{\|a^{(2,l)}_{p,\,q}\|}\leq
\dfrac{(1+\kappa)\|q-q^\prime\|}{1-\kappa\|q-q^\prime\|}$ and
$\|a^{(2,l)}_{p,\,q^\prime}\|\leq\dfrac{1+\|q'-q\|}{1-\kappa\|q'-q\|}\,\|a^{(2,l)}_{p,\,q}\|$.
\item[$(2)$] If there are some $w, \, v \in \mathscr{A}$ with $R_r(w) = R_r(p)$,\;$K_r(w) = K_r(v) =R_r(q)$ and
$R_r(v) = R_r(q^\prime)$. Then
$$
a_{p,\,q^\prime}^{(2,l)}=a^{(2,l)}_{p,\,q}+a^{(2,l)}_{p,\,q}(av)^{(1,5)}a(v-w)(1-aa^{(2,l)}_{p,\,q}).
$$
\end{enumerate*}
\end{thm}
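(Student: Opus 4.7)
The plan is to establish existence of $a^{(2,l)}_{p,q'}$ via the gap-function framework, to derive the norm estimates in~(1) from a suitable identity for $b'-b$ linearised through $q'-q$, and to verify the explicit formula in~(2) by direct substitution against the three defining relations of Definition~\ref{mdef1.2}. Throughout, set $b:=a^{(2,l)}_{p,q}$.

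For existence, I first observe that $ab$ is an idempotent with $K_r(ab)=K_r(b)=R_r(q)$ (the inclusion $K_r(ab)\subset K_r(b)$ using $K_r(a)\cap R_r(p)=\{0\}$ from Lemma~\ref{mlem1.3}), so $1-ab$ is an idempotent with $R_r(1-ab)=R_r(q)$ and $\|1-ab\|\leq 1+\|ab\|\leq 1+\kappa$. Lemma~\ref{mlem3.2} then yields $\hat\delta(R_r(q'),R_r(q))\leq \|q-q'\|<1/(2+\kappa)\leq 1/(1+\|1-ab\|)$, so Lemma~\ref{mlem3.1} applied with the idempotent $1-ab$ produces the direct sum $\A=R_r(q')\dotplus K_r(1-ab)=R_r(q')\dotplus aR_r(p)$. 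Together with $K_r(a)\cap R_r(p)=\{0\}$, Lemma~\ref{mlem1.3} then delivers $b':=a^{(2,l)}_{p,q'}$.

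For the estimate in~(1), I would first prove the pair of identities $bab'=b'$ and $b'ab=b$: both $ba$ and $b'a$ are projections onto the common range $R_r(p)=R_r(b)=R_r(b')$ and act as the identity there, so composing them against $b'$ and $b$ respectively yields the relations. Subtracting gives $(b'-b)ab'=b'-b'=0$, hence $b'-b=(b'-b)(1-ab')=-b(1-ab')$. Now for any $x\in\A$, set $t':=(1-ab')x$; then $t'\in R_r(q')=K_r(b')$ gives $q't'=t'$, and combined with $bq=0$ this yields $bt'=bq't'=b(q'-q)t'$. Using $\|t'\|\leq (1+\|a\|\|b'\|)\|x\|$ one obtains
\[
\|b'-b\|\leq \|b\|\,\|q-q'\|\bigl(1+\|a\|\|b'\|\bigr).
\]
Substituting $\|b'\|\leq \|b\|+\|b'-b\|$, so that $\|a\|\|b'\|\leq \kappa+\|a\|\|b'-b\|$, and collecting terms produces $\|b'-b\|(1-\kappa\|q-q'\|)\leq (1+\kappa)\|q-q'\|\|b\|$. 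Since $\|q-q'\|<1/(2+\kappa)<1/\kappa$ ensures $1-\kappa\|q-q'\|>0$, a rearrangement yields both estimates of~(1), the bound on $\|b'\|$ following from $\|b'\|\leq \|b\|+\|b'-b\|$.

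For~(2) I would verify that the candidate $B:=b+b(av)^{(1,5)}a(v-w)(1-ab)$ satisfies the three defining relations of Definition~\ref{mdef1.2}; then the uniqueness of the outer generalized inverse identifies $B$ with $a^{(2,l)}_{p,q'}$. The hypotheses $R_r(w)=R_r(p)$ and $R_r(v)=R_r(q')$, combined with the inner-regularity relation $av(av)^{(1,5)}av=av$ supplied by Lemma~\ref{mlem3.5} through $w$, are the ingredients used to force $R_r(B)\subseteq R_r(p)$ and $K_r(B)=R_r(q')$, while $BaB=B$ follows by expanding and reducing with $bab=b$ and the $(1,5)$-relation. The main obstacle is the choice of representation for $b'-b$ in~(1): the symmetric pair $b'-b=b'(1-ab)=-b(1-ab')$ admits two dual forms, and only the second interacts with $bq=0$ in such a way as to absorb the factor $\|a\|$ into the linear term of the recursion, producing the denominator $1-\kappa\|q-q'\|$ stated in the theorem; starting from the first form one obtains only the weaker denominator $1-(1+\kappa)\|q-q'\|$ and misses the sharp bound.
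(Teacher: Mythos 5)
Your existence argument coincides with the paper's (Lemma \ref{mlem3.2} plus Lemma \ref{mlem3.1} applied to the idempotent $1-ab$, whose right range is $R_r(q)$ and whose right kernel is $aR_r(p)$, followed by Lemma \ref{mlem1.3}), and your part (1) is correct but follows a genuinely different route. The paper bounds $\D\big((1-ab')x,\,R_r(q)\big)$ by the gap $\hat\delta(R_r(q'),R_r(q))\le\|q-q'\|$ and chooses an approximant $(1-ab)z$ up to an $\epsilon$ which is then sent to $0$; you replace this metric step by the purely algebraic identity $b(1-ab')x=b(q'-q)(1-ab')x$, valid because $(1-ab')x\in K_r(b')=R_r(q')$ and $bq=0$, and the representation $b'-b=-b(1-ab')$, which rests on $bab'=b'$ (correctly justified via Lemma \ref{mlem1.1}(2), since $ba$ is idempotent with $R_r(ba)=R_r(p)=R_r(b')$). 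The resulting linear recursion gives exactly the stated constants $\dfrac{(1+\kappa)\|q-q'\|}{1-\kappa\|q-q'\|}$ and $\dfrac{1+\|q-q'\|}{1-\kappa\|q-q'\|}$, with no limiting argument; this is a cleaner derivation of (1) than the one in the paper.

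Part (2), however, is not proved: you only assert that the candidate $B=b+b(av)^{(1,5)}a(v-w)(1-ab)$ ``satisfies the three defining relations'' and that this ``follows by expanding and reducing.'' That verification is the entire content of (2), and it is not routine. Concretely, you never establish that $(av)^{(1,5)}$ exists nor that $a^{(2,l)}_{p,q'}=v(av)^{(1,5)}=(va)^{(1,5)}v$; this comes from Lemma \ref{mlem3.5} applied to the pair $(p,q')$ with the element $v$, which requires $R_r(v)=R_r(p)$ and $K_r(v)=R_r(q')$ (so the hypotheses on $v$ must be read accordingly), together with the companion representation $b=w(aw)^{(1,5)}$ for $w$. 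The paper then does not check Definition \ref{mdef1.2} directly but the four conditions of Lemma \ref{mlem1.3}(4) for $(p,q')$, namely $pB=B$, $BaP$-type identity $Bap=p$, $B(1-q')=B$ and $(1-q')aB=1-q'$, using identities such as $abap=ap$, $bab'=b'$, $b'ab=b$, $ab=aw(aw)^{(1,5)}$, $awab=aw$ and the commutation of $av$ with $(av)^{(1,5)}$. Note also that your plan of verifying only $R_r(B)\subseteq R_r(p)$ would not suffice: the defining relations require equalities of the ranges and kernels, which is precisely why the route through Lemma \ref{mlem1.3}(4) and uniqueness is the efficient one. Without these computations, your part (2) remains a plan rather than a proof.
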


\begin{proof}
Since $1+\|1-ab\|\leq 2+\kappa$, then by Lemma \ref{mlem3.2}, we have
$$
\hat{\delta}(R_r(q), R_r(q^\prime))\leq \|q-q^\prime\|<\frac{1}{2+\kappa}\leq \frac{1}{1+\|1-ab\|}.
$$
So $\mathscr{A}=aR_r(p) \dotplus R_r(q^\prime)$ by Lemma \ref{mlem3.1}. Note that $K_r(a) \cap R_r(p)=\{0\}$. So
$a^{(1,2)}_{p,\,q^\prime}$ exists.

(1) From $a^{(2,l)}_{p,\,q^\prime}=pa^{(2,l)}_{p,\,q^\prime}=a^{(2,l)}_{p,\,q}aa^{(2,l)}_{p,\,q^\prime}$, we get $$a^{(2,l)}_{p,\,q^\prime}-a^{(2,l)}_{p,\,q}=a^{(2,l)}_{p,\,q}(aa^{(2,l)}_{p,\,q^\prime}-aa^{(2,l)}_{p,\,q}).$$ Since $(1-aa^{(2,l)}_{p,\,q^\prime})x \in R_r(q^\prime)$ for any $x \in \A$,  so we have
\begin{align*}
\D((1-aa^{(2,l)}_{p,\,q^\prime})x, R_r(q))&\leq \hat{\delta}(R_r(q^\prime),R_r(q))\|(1-aa^{(2,l)}_{p,\,q^\prime})x\|\\&\leq \|q-q^\prime\|\|(1-aa^{(2,l)}_{p,\,q^\prime})x\|.
\end{align*}
Since $K_r(a^{(2,l)}_{p,\,q})=R_r(q)$, then, for any $\epsilon>0$, there is
$z\in \A$ such that $$\|(1-aa^{(2,l)}_{p,\,q^\prime})x-(1-aa^{(2,l)}_{p,\,q})z\| \leq\|q-q^\prime\|\|(1-aa^{(2,l)}_{p,\,q^\prime})x\|+\epsilon.$$ and then we have
\begin{align}\label{eq3.5}
\|(a^{(2,l)}_{p,\,q^\prime}-a^{(2,l)}_{p,\,q})x\|&=\|a^{(2,l)}_{p,\,q}(aa^{(2,l)}_{p,\,q^\prime}-aa^{(2,l)}_{p,\,q})x\|  \nonumber\\& \leq   \|a^{(2,l)}_{p,\,q}\{(1-aa^{(2,l)}_{p,\,q^\prime})x-(1-aa^{(2,l)}_{p,\,q})z\}\| \nonumber\\& \leq \left(\|q-q^\prime\|\|(1-aa^{(2,l)}_{p,\,q^\prime})x\|+\epsilon\right) \|a^{(2,l)}_{p,\,q}\|.
\end{align}
Since we also have
\begin{align}\label{eq3.6}
\|(1-aa^{(2,l)}_{p,\,q^\prime})x\|& \leq \|x\|+\|a\|\|a^{(2,l)}_{p,\,q}x-(a^{(2,l)}_{p,\,q^\prime}-a^{(2,l)}_{p,\,q})x\| \nonumber\\&\leq (1+\kappa)\|x\|+\|a\|\|(a^{(2,l)}_{p,\,q^\prime}-a^{(2,l)}_{p,\,q})x\|.
\end{align}
Now from Eqs. \eqref{eq3.5} and  \eqref{eq3.6}, we can compute $$\|(a^{(2,l)}_{p,\,q^\prime}-a^{(2,l)}_{p,\,q})x\|\leq \dfrac{(\|q-q^\prime\|(1+\kappa)\|x\|+\epsilon)\|a^{(2,l)}_{p,\,q}\|}{1-\kappa\|q-q^\prime\|}.$$
Let $\epsilon \to 0^+$ in the above inequality, we obtain that
$$
\frac{\|a^{(2,l)}_{p,\,q^\prime}-a^{(2,l)}_{p,\,q}\|}{\|a^{(2,l)}_{p,\,q}\|}\leq
\dfrac{(1+\kappa)\|q^\prime-q\|}{1-\kappa\|q^\prime-q\|},\quad
\|a^{(2,l)}_{p,\,q^\prime}\|\leq\frac{1+\|q'-q\|}{1-\kappa\|q'-q\|}\,\|a^{(2,l)}_{p,\,q}\|.
$$

(2) By Lemma \ref{mlem3.5}, $a^{(2,l)}_{p, q^\prime}=(va)^{(1,5)} v=v(av)^{(1,5)}$. For convenience, we write
$b=a^{(2,l)}_{p,\,q}$, $b^\prime=a_{p,\,q^\prime}^{(2,l)}$ and
$x=a^{(2,l)}_{p,\,q}+a^{(2,l)}_{p,\,q}(av)^{(1,5)}a(v-w)(1-aa^{(2,l)}_{p,\,q})$.
Now we prove that $x=b^\prime$ by using Lemma \ref{mlem1.3}\,(4). Obviously, we have $px=x$. Note that
$p=a^{(2,l)}_{p,\,q}ap$, so
\begin{align*}
xap&=\{a^{(2,l)}_{p,\,q}+a^{(2,l)}_{p,\,q}(av)^{(1,5)}a(v-w)(1-aa^{(2,l)}_{p,\,q})\}ap\\
&=a^{(2,l)}_{p,\,q}ap+a^{(2,l)}_{p,\,q}(av)^{(1,5)}a(v-w)(ap-ap)\\&=p.
\end{align*}
Since $b=w(aw)^{(1,5)}$. we have
\begin{align*}
x q^\prime &=\{a^{(2,l)}_{p,\,q}+a^{(2,l)}_{p,\,q}(av)^{(1,5)}a(v-w)(1-aa^{(2,l)}_{p,\,q})\}q^\prime\\
&=b q^\prime +\{b(av)^{(1,5)}av-b(av)^{(1,5)}avab-b(av)^{(1,5)}aw+b(av)^{(1,5)}awab   \}q^\prime.\\
&=b q^\prime +bab^\prime q^\prime-bab^\prime abq^\prime-b(av)^{(1,5)}awq^\prime+b(av)^{(1,5)}awabq^\prime.\\
&=b q^\prime +0-bq^\prime-b(av)^{(1,5)}awq^\prime+b(av)^{(1,5)}awaw(aw)^{(1,5)}q^\prime.\\
&=0.
\end{align*}
Thus, we have $x(1- q^\prime)=x$. Finally, since $aw= awaw(aw)^{(1,5)} =awab$, we have
\begin{align*}
(1- q^\prime)ax& =(1- q^\prime)a\{a^{(2,l)}_{p,\,q}+a^{(2,l)}_{p,\,q}(av)^{(1,5)}a(v-w)(1-aa^{(2,l)}_{p,\,q})\}\\
& =(1- q^\prime)ab^\prime a\{a^{(2,l)}_{p,\,q}+a^{(2,l)}_{p,\,q}(av)^{(1,5)}a(v-w)(1-aa^{(2,l)}_{p,\,q})\}\\
& =(1- q^\prime)\{ab^\prime ab+ab^\prime ab(av)^{(1,5)}(av-w)(1-ab)\}\\
& =(1- q^\prime)(ab+ab^\prime-ab-ab^\prime ab(av)^{(1,5)}aw+ab^\prime ab(av)^{(1,5)}awab)\\&=(1- q^\prime)ab^\prime\\&=1- q^\prime.
\end{align*}
Therefore by Lemma \ref{mlem1.3} and the uniqueness of $a^{(2,l)}_{p,\,q^\prime}$, we get that $x=a^{(2,l)}_{p,\,q^\prime}$.
\end{proof}

When the idempotents $p$ and $q$ both have some small perturbations, we have the following result.

\begin{thm}\label{mthm3.8}
Let $a\in \mathscr{A}$ and $p,\, q ,\, p^\prime,\, q^\prime \in \mathscr{A}^\bullet$ such that $a^{(2,l)}_{p,\,q}$ exists.
 If $\|p-p^\prime\|< \dfrac{1}{(1+\kappa)^2}$ and $\|q-q^\prime\|< \dfrac{1}{3+\kappa}$. Then
 $a_{p^\prime,\,q^\prime}^{(2,l)}$ exists and
\begin{align*}
\frac{\|a^{(2,l)}_{p^\prime,\,q^\prime}-a^{(2,l)}_{p,\,q}\|}{\|a_{p,\, q}^{(2,l)}\|}&\leq
\frac{(1+\kappa)(\|p-p'\|+\|q-q'\|)}{1-(1+\kappa)\|p-p'\|-\kappa\|q-q'\|}\\
\|a_{p^\prime,\,q^\prime}^{(2,l)}\|&\leq\dfrac{(1+\|q-q'\|)\|a_{p,\, q}^{(2,l)}\|}{1-(1+\kappa)\|p-p'\|-\kappa\|q-q'\|}.
\end{align*}
\end{thm}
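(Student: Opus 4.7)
My plan is to prove Theorem \ref{mthm3.8} in two logically separate stages: first establish the existence of $a^{(2,l)}_{p',q'}$ via a sequential application of Theorems \ref{mthm3.4} and \ref{mthm3.6}, then derive the quantitative estimates through a single direct decomposition argument that handles both perturbations simultaneously (so as to produce the clean denominator $1-(1+\kappa)\|p-p'\|-\kappa\|q-q'\|$ that mere composition of the two earlier bounds would not yield).

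For existence, I would apply Theorem \ref{mthm3.4} to the one-sided perturbation $(p,q)\to(p',q)$. Since $\|p-p'\|<1/(1+\kappa)^2$, that theorem gives existence of $a^{(2,l)}_{p',q}$ together with $\|a^{(2,l)}_{p',q}\|\le\|a^{(2,l)}_{p,q}\|/(1-(1+\kappa)\|p-p'\|)$. A short calculation using $(1+\kappa)\|p-p'\|<1/(1+\kappa)$ shows that $\kappa_1:=\|a\|\|a^{(2,l)}_{p',q}\|\le 1+\kappa$, whence $2+\kappa_1\le 3+\kappa$. The hypothesis $\|q-q'\|<1/(3+\kappa)\le 1/(2+\kappa_1)$ then permits invoking Theorem \ref{mthm3.6} on $a^{(2,l)}_{p',q}$ with the perturbation $q\to q'$, yielding existence of $a^{(2,l)}_{p',q'}$.

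For the norm bounds I would redo the estimate from scratch rather than chain the two earlier ones. Write $b=a^{(2,l)}_{p,q}$ and $b'=a^{(2,l)}_{p',q'}$. Lemma \ref{mlem1.3} gives $\A=aR_r(p')\dotplus R_r(q')$, so any $x\in\A$ admits a decomposition $x=aw'+z'$ with $w'\in R_r(p')$ and $z'\in R_r(q')$. Using Lemma \ref{mlem1.3}(4) I compute $b'x=w'$, so $(b'-b)x=(1-ba)w'-bz'$. Since $(1-ba)y=0$ for $y\in R_r(p)$ (because $bap=p$) and $by'=0$ for $y'\in R_r(q)$, Lemma \ref{mlem3.2} lets me approximate $w'$ within $\|p-p'\|\|w'\|+\epsilon$ of $R_r(p)$ and $z'$ within $\|q-q'\|\|z'\|+\epsilon$ of $R_r(q)$; combining these with $\|1-ba\|\le 1+\kappa$ and sending $\epsilon\to 0^+$ gives
$$
\|(b'-b)x\|\le (1+\kappa)\|p-p'\|\|w'\|+\|b\|\|q-q'\|\|z'\|.
$$

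Finally I substitute $\|w'\|=\|b'x\|$ and $\|z'\|\le\|x\|+\|a\|\|b'x\|$, then use $\|b'x\|\le\|(b'-b)x\|+\|b\|\|x\|$ to absorb $\|b'x\|$ on the left. Collecting the $\|(b'-b)x\|$ coefficients yields exactly the factor $1-(1+\kappa)\|p-p'\|-\kappa\|q-q'\|$, while the right-hand coefficient simplifies via the identity $[(1+\kappa)\|p-p'\|+\kappa\|q-q'\|]+\|q-q'\|=(1+\kappa)(\|p-p'\|+\|q-q'\|)$ to $(1+\kappa)(\|p-p'\|+\|q-q'\|)\|b\|\|x\|$, proving the first inequality. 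Adding $\|b\|\|x\|$ to both sides and observing that the numerator telescopes to $1+\|q-q'\|$ then gives the bound on $\|a^{(2,l)}_{p',q'}\|$. The main subtlety is the choice to decompose $x$ via the \emph{perturbed} idempotents $p',q'$ rather than $p,q$: this is what makes both perturbation scales appear symmetrically and lets them combine into a single clean denominator, and it is also what forces the sharper hypothesis $\|q-q'\|<1/(3+\kappa)$ (via the $\kappa_1\le 1+\kappa$ estimate in Stage 1).
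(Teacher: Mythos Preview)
Your existence argument (Stage~1) is essentially the paper's: apply Theorem~\ref{mthm3.4} to pass from $(p,q)$ to $(p',q)$, bound the intermediate condition number $\kappa_1=\|a\|\|a^{(2,l)}_{p',q}\|$ by $1+\kappa$, and then invoke Theorem~\ref{mthm3.6} for the step $(p',q)\to(p',q')$.

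For the estimates, however, you take a genuinely different route. The paper simply \emph{chains} the bounds of Theorems~\ref{mthm3.4} and~\ref{mthm3.6} through the triangle inequality
\[
\|a^{(2,l)}_{p',q'}-a^{(2,l)}_{p,q}\|\le\|a^{(2,l)}_{p',q'}-a^{(2,l)}_{p',q}\|+\|a^{(2,l)}_{p',q}-a^{(2,l)}_{p,q}\|,
\]
substitutes $\kappa_1\le\kappa/(1-(1+\kappa)\|p-p'\|)$ into the Theorem~\ref{mthm3.6} bound, and then carries out a nontrivial algebraic simplification (the key identity being that the combined numerator factors as $(\|p-p'\|+\|q-q'\|)\bigl(1-(1+\kappa)\|p-p'\|\bigr)$, which cancels against the extra denominator). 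Your approach instead decomposes an arbitrary $x$ via the \emph{perturbed} splitting $\A=aR_r(p')\dotplus R_r(q')$ and bounds $(1-ba)w'$ and $bz'$ separately by gap--function approximation; the absorption step $\|b'x\|\le\|(b'-b)x\|+\|b\|\|x\|$ then yields the clean denominator in one stroke. Your argument is correct (the identities $b'x=w'$ and $(b'-b)x=(1-ba)w'-bz'$ check out, and the final algebra is as you describe), and it has the conceptual advantage of explaining \emph{why} the two perturbation scales combine additively in the denominator rather than multiplicatively. The paper's chaining argument, on the other hand, is more modular---it treats Theorems~\ref{mthm3.4} and~\ref{mthm3.6} as black boxes---at the cost of the algebraic cleanup at the end.
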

\begin{proof}
By Theorem \ref{mthm3.4}, $a_{p^\prime,\,q}^{(2,l)}$ exists when $\|p-p^\prime\|< \dfrac{1}{(1+\kappa)^2}$ and in this case,
$$
\|a^{(2,l)}_{p^\prime, q}\|\leq\frac{\|a^{(2,l)}_{p,\,q}\|}{1-(1+ \kappa )\|p^\prime -p\|}\leq\frac{1+\kappa}{\kappa}
\|a^{(2,l)}_{p,\,q}\|.
$$
So $\|q-q^\prime\|< \dfrac{1}{3+\kappa}<\dfrac{1}{2+\|a\|\|a^{(2,l)}_{p',\,q}\|}$ and consequently,
$a^{(2,l)}_{p',\,q'}$ exists by Theorem \ref{mthm3.6}. Finally, by Theorem \ref{mthm3.4} and Theorem \ref{mthm3.6}, we have
\begin{align*}
\|a_{p^\prime,\,q^\prime}^{(2,l)}-a_{p,\,q}^{(2,l)}\|&\leq\|a_{p',\, q'}^{(2,l)}-a_{p',\, q}^{(2,l)}\|+
\|a_{p',\, q}^{(2,l)}-a^{(2,l)}_{p,\,q}\|\\
&\leq\frac{(1+\|a\|\|a_{p',\, q}^{(2,l)})\|q-q'\|}{1-\|a\|\|a_{p',\, q}^{(2,l)}\|\|q-q'\|}\|a_{p',\, q}^{(2,l)}\|
+\frac{(1+\kappa)\|p-p'\|}{1-(1+\kappa)\|p-p'\|}\|a_{p',\, q}^{(2,l)}\|\\
&\leq\frac{(1+\kappa)(1-\|p-p'\|)\|q-q'\|}{1-(1+\kappa)\|p-p'\|-\kappa\|q-q'\|}
\frac{\|a_{p',\, q}^{(2,l)}\|}{1-(1+\kappa)\|p-p'\|}\\
&\ +\frac{(1+\kappa)\|p-p'\|}{1-(1+\kappa)\|p-p'\|}\|a_{p',\, q}^{(2,l)}\|\\
&=\frac{(1+\kappa)(\|p-p'\|+\|q-q'\|)}{1-(1+\kappa)\|p-p'\|-\kappa\|q-q'\|}\|a_{p,\, q}^{(2,l)}\|
\end{align*}
and $\|a_{p^\prime,\,q^\prime}^{(2,l)}\|\leq\|a_{p^\prime,\,q^\prime}^{(2,l)}-a_{p,\,q}^{(2,l)}\|+\|a_{p,\, q}^{(2,l)}\|
\leq\dfrac{(1+\|q-q'\|)\|a_{p,\, q}^{(2,l)}\|}{1-(1+\kappa)\|p-p'\|-\kappa\|q-q'\|}.$
\end{proof}

Now, we consider the case when the elements $a, p, q\in \Ad$ all have some small perturbations.

\begin{thm}\label{bmthm3.9}
Let $a, \delta a\in \mathscr{A}$ and $p,\, q ,\, p^\prime,\, q^\prime \in \mathscr{A}^\bullet$ such that
$a^{(2,l)}_{p,\,q}$ exists. If $\|p-p^\prime\|< \dfrac{1}{(\kappa+1)^2}$, $\|q-q^\prime\|< \dfrac{1}{\kappa+3}$ and
$\|a^{(2,l)}_{p,\,q}\|\|\delta a\|<\dfrac{2\kappa}{(\kappa+1)(\kappa+4)}$. Then $\bar{a}_{p^\prime,\,q^\prime}^{(2,l)}$
exists and
\begin{align*}
\|\bar{a}_{p^\prime,\,q^\prime}^{(2,l)}\|&\leq
\frac{(1+\|q-q'\|)\|a_{p,\, q}^{(2,l)}\|}{1-(1+\kappa)\|p-p'\|-\kappa\|q-q'\|-(1+\|q-q'\|)\|a_{p,\, q}^{(2,l)}\|\|\delta a\|}\\
\|\bar{a}_{p^\prime,\,q^\prime}^{(2,l)}-a_{p,q}^{(2,l)}\|&\leq
\frac{\|a_{p,\,q}^{(2,l)}\|}{1-(1+\kappa)\|p-p'\|-\kappa\|q-q'\|}\Big[(1+\kappa)(\|p-p'\|+\|q-q'\|)\\
&\ +\frac{(1+\|q-q'\|)^2\|\delta a\|\|a_{p,\,q}^{(2,l)}\|}{1-(1+\kappa)\|p-p'\|-\kappa\|q-q'\|-(1+\|q-q'\|)
\|a_{p,\, q}^{(2,l)}\|\|\delta a\|}\Big].
\end{align*}
\end{thm}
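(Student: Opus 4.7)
The plan is to combine two perturbation results already established in the paper: Theorem \ref{mthm3.8}, which handles the simultaneous perturbation of the idempotents $p,q$ while keeping $a$ fixed, and Theorem \ref{mthm2.4} together with Lemma \ref{mlem2.2}, which handle the perturbation of $a$ with the idempotents held fixed. The idea is to process the three perturbations in two stages: first pass from $a^{(2,l)}_{p,q}$ to $a^{(2,l)}_{p',q'}$, then pass from $a^{(2,l)}_{p',q'}$ to $\bar a^{(2,l)}_{p',q'}$ via a stable perturbation argument.

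First, since the hypotheses $\|p-p'\|<(\kappa+1)^{-2}$ and $\|q-q'\|<(\kappa+3)^{-1}$ are exactly the hypotheses of Theorem \ref{mthm3.8}, I obtain that $a^{(2,l)}_{p',q'}$ exists, together with the bound
$$
\|a^{(2,l)}_{p',q'}\|\leq\frac{(1+\|q-q'\|)\|a^{(2,l)}_{p,q}\|}{1-(1+\kappa)\|p-p'\|-\kappa\|q-q'\|}=:M,
$$
and the error estimate
$$
\|a^{(2,l)}_{p',q'}-a^{(2,l)}_{p,q}\|\leq\frac{(1+\kappa)(\|p-p'\|+\|q-q'\|)}{1-(1+\kappa)\|p-p'\|-\kappa\|q-q'\|}\,\|a^{(2,l)}_{p,q}\|.
$$

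Next I want to apply Theorem \ref{mthm2.4} to $a^{(2,l)}_{p',q'}$ with the perturbation $\delta a$, so I must verify that $1+a^{(2,l)}_{p',q'}\delta a$ is invertible, that is, $M\|\delta a\|<1$. The delicate numerical check is that the third hypothesis $\|a^{(2,l)}_{p,q}\|\|\delta a\|<\tfrac{2\kappa}{(\kappa+1)(\kappa+4)}$, combined with the first two, forces the denominator $1-(1+\kappa)\|p-p'\|-\kappa\|q-q'\|-(1+\|q-q'\|)\|a^{(2,l)}_{p,q}\|\|\delta a\|$ to be strictly positive; this is essentially the main technical point and amounts to a careful juggling of the three smallness assumptions. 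Once this is granted, Theorem \ref{mthm2.4} yields the existence of $\bar a^{(2,l)}_{p',q'}$ together with
$$
\bar a^{(2,l)}_{p',q'}=a^{(2,l)}_{p',q'}\bigl(1+\delta a\,a^{(2,l)}_{p',q'}\bigr)^{-1},
$$
and the standard Neumann series bound gives
$$
\|\bar a^{(2,l)}_{p',q'}\|\leq\frac{M}{1-M\|\delta a\|},
$$
which, after substituting the value of $M$, collapses to the first claimed inequality.

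Finally, to prove the error estimate I split
$$
\bar a^{(2,l)}_{p',q'}-a^{(2,l)}_{p,q}=\bigl(\bar a^{(2,l)}_{p',q'}-a^{(2,l)}_{p',q'}\bigr)+\bigl(a^{(2,l)}_{p',q'}-a^{(2,l)}_{p,q}\bigr).
$$
For the first summand I use the identity $\bar a^{(2,l)}_{p',q'}-a^{(2,l)}_{p',q'}=-a^{(2,l)}_{p',q'}\delta a\,\bar a^{(2,l)}_{p',q'}$, which follows directly from the representation in Lemma \ref{mlem2.2}, giving
$$
\|\bar a^{(2,l)}_{p',q'}-a^{(2,l)}_{p',q'}\|\leq\|a^{(2,l)}_{p',q'}\|\,\|\delta a\|\,\|\bar a^{(2,l)}_{p',q'}\|\leq\frac{M^2\|\delta a\|}{1-M\|\delta a\|}.
$$
For the second summand I use the error bound from Theorem \ref{mthm3.8} recalled above. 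Adding the two bounds and substituting the value of $M$ yields the second claimed inequality; the only step needing attention is making the algebraic simplification collapse to the stated compact form, which follows by pulling $\|a^{(2,l)}_{p,q}\|/(1-(1+\kappa)\|p-p'\|-\kappa\|q-q'\|)$ as a common factor. The main obstacle is therefore the verification that the three smallness hypotheses are strong enough to keep all denominators strictly positive throughout the estimates; the rest is essentially bookkeeping.
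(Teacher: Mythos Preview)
Your proposal is correct and follows essentially the same two-stage strategy as the paper: first invoke Theorem \ref{mthm3.8} to pass from $a^{(2,l)}_{p,q}$ to $a^{(2,l)}_{p',q'}$, then apply Theorem \ref{mthm2.4} with the Neumann series bound to handle the perturbation $\delta a$, and finally split the error $\bar a^{(2,l)}_{p',q'}-a^{(2,l)}_{p,q}$ into the same two pieces. The only step the paper makes more explicit than you do is the numerical check $\|a^{(2,l)}_{p',q'}\|\|\delta a\|<1$: using the bounds $\|p-p'\|<(\kappa+1)^{-2}$ and $\|q-q'\|<(\kappa+3)^{-1}$ one gets $\|a^{(2,l)}_{p',q'}\|<\tfrac{(\kappa+1)(\kappa+4)}{2\kappa}\|a^{(2,l)}_{p,q}\|$, which combined with the third hypothesis gives the needed invertibility.
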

\begin{proof}
Theorem \ref{mthm3.8} indicates that $a_{p^\prime,\,q^\prime}^{(2,l)}$ exists and
$$
\|a^{(2,l)}_{p'\, ,q'}\|\leq\dfrac{(1+\|q-q'\|)\|a_{p,\, q}^{(2,l)}\|}{1-(1+\kappa)\|p-p'\|-\kappa\|q-q'\|}
<\dfrac{(1+\kappa)(4+\kappa)}{2\kappa}\|a^{(2,l)}_{p,\,q}\|.
$$
Thus, $\|a^{(2,l)}_{p^\prime,q^\prime}\|\|\delta a\|<1$ and hence $1+a^{(2,l)}_{p^\prime,q^\prime}\delta a$ is invertible.
Therefore, $a^{(2,l)}_{p^\prime,q^\prime}$ exists and
$\bar{a}^{(2,l)}_{p^\prime,q^\prime}=a^{(2,l)}_{p^\prime,q^\prime}(1+\delta aa^{(2,l)}_{p^\prime,q^\prime})^{-1}$ by
Theorem \ref{mthm2.4}. Now by Theorem \ref{mthm3.8}, we have
$$
\|\bar{a}_{p^\prime,q^\prime}^{(2,l)}\|\!\leq\!
\dfrac{\|a_{p^\prime,q^\prime}^{(2,l)}\|}{1-\|a_{p^\prime,q^\prime}^{(2,l)}\|\|\delta a\|}
\!\leq\!\frac{[1+\|q-q'\|]\|a_{p,\, q}^{(2,l)}\|}{1-[1+\kappa]\|p-p'\|-\kappa\|q-q'\|-[1+\|q-q'\|]\|a_{p,\, q}^{(2,l)}\|\|\delta a\|}
$$
and
\begin{align*}
\|\bar{a}_{p^\prime,\,q^\prime}^{(2,l)}-a_{p,q}^{(2,l)}\|&\leq\|(1+{a}_{p^\prime,\,q^\prime}^{(2,l)} \delta a)^{-1}
{a}_{p^\prime,\,q^\prime}^{(2,l)}-a_{p',\,q'}^{(2,l)}\|+\|a_{p',\,q'}^{(2,l)}-a_{p,\,q}^{(2,l)}\|\\
&\leq\frac{\|\delta a\|\|a_{p^\prime,\,q^\prime}^{(2,l)}\|^2}{1-\|\delta a\|\|a_{p^\prime,\,q^\prime}^{(2,l)}\|}
+\|a_{p',\,q'}^{(2,l)}-a_{p,\,q}^{(2,l)}\|\\
&\leq\frac{\|a_{p,\,q}^{(2,l)}\|}{1-(1+\kappa)\|p-p'\|-\kappa\|q-q'\|}\Big[(1+\kappa)(\|p-p'\|+\|q-q'\|)\\
&\ +\frac{(1+\|q-q'\|)^2\|\delta a\|\|a_{p,\,q}^{(2,l)}\|}{1-(1+\kappa)\|p-p'\|-\kappa\|q-q'\|-(1+\|q-q'\|)
\|a_{p,\, q}^{(2,l)}\|\|\delta a\|}\Big].
\end{align*}
This completes the proof.
\end{proof}

By using perturbation theorems for the generalized inverse $a^{(2,l)}_{p,\,q}$, we can also investigate the perturbation
analysis for the generalized inverse $a^{(1,2)}_{p,\,q}$ under some conditions.

\begin{cor}\label{ncor3.11}
Let $a\in \mathscr{A}$ and $p,\, q \in \mathscr{A}^\bullet$ such that $a^{(1,2)}_{p,\,q}$ exists. Suppose that
$p^\prime \in\mathscr{A}^\bullet$ with $\|p-p^\prime\|< \dfrac{1}{(1+\kappa)^2}$ and $ap'=a$. Then $a_{p^\prime,\,q}^{(1,2)}$ exists
and
$$
\frac{\|a^{(1,2)}_{p^\prime, q} - a^{(1,2)}_{p,\,q}\|}{\| a^{(2,l)}_{p,\,q}\|}
\leq \dfrac{(1+ \kappa )\|p^\prime -p\|}{1-(1+ \kappa )\|p^\prime -p\|}\ \text{and}\
\|a^{(1,2)}_{p^\prime, q}\|\leq\frac{\|a^{(1,2)}_{p,\,q}\|}{1-(1+ \kappa )\|p^\prime -p\|}.
$$
\end{cor}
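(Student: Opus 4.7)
The plan is to deduce this corollary from Theorem \ref{mthm3.4} applied to the outer generalized inverse
$a^{(2,l)}_{p,q}$, and then to show that the outer inverse produced at the perturbed idempotent $p'$ is automatically
the $(1,2)$-inverse under the extra hypothesis $ap'=a$.

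First I would observe that $b:=a^{(1,2)}_{p,q}$ is also equal to $a^{(2,l)}_{p,q}$: the identities $ba=p$ and
$1-ab=q$ combined with $bab=b$ force $pb=b$ and $bq=0$, so Lemma \ref{mlem1.1} gives
$R_r(b)=R_r(p)$ and $K_r(b)=R_r(q)$. In particular the constant
$\kappa=\|a\|\|a^{(1,2)}_{p,q}\|$ coincides with $\|a\|\|a^{(2,l)}_{p,q}\|$, so Theorem \ref{mthm3.4} immediately
produces $b':=a^{(2,l)}_{p',q}$ together with the two norm estimates stated in the corollary.

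The substantive task is to promote $b'$ to $a^{(1,2)}_{p',q}$, i.e., to verify $b'a=p'$, $1-ab'=q$ and $ab'a=a$;
uniqueness then identifies $b'$ with $a^{(1,2)}_{p',q}$ and the bounds transfer verbatim. Since $b'a$ and $ab'$
are idempotents, the first two identities reduce to matching ranges and kernels. The ranges fall out quickly:
$R_r(b'a)=R_r(b')=R_r(p')$ from the defining properties of $a^{(2,l)}_{p',q}$, while
$R_r(ab')=aR_r(b')=aR_r(p')=R_r(a)$ using $ap'=a$; and in the unperturbed regime $aba=a$ with $ab=1-q$ forces
$R_r(a)=R_r(ab)=R_r(1-q)=K_r(q)$, so $R_r(ab')=K_r(q)$, which is exactly what $ab'=1-q$ requires on the range side.

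The main obstacle is the kernel side, and this is where $ap'=a$ (equivalently $a(1-p')=0$) is essential. For
$K_r(b'a)=K_r(p')$ the inclusion $K_r(p')\subset K_r(b'a)$ is immediate from $a(1-p')=0$; for the reverse, if
$b'ax=0$ I would split $x=p'x+(1-p')x$ and note $b'a(1-p')x=b'\cdot a(1-p')\cdot x=0$, so $b'ap'x=0$; since
$R_r(p')=R_r(b')$, writing $p'x=b'z$ and applying $b'ab'=b'$ gives $p'x=b'z=b'ab'z=b'ap'x=0$. A symmetric argument
combining $b'q=0$ with $R_r(1-q)=R_r(a)=R_r(ab')$ delivers $K_r(ab')=R_r(q)$. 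Once $b'a=p'$ and $ab'=1-q$ are
secured, the third identity is free: $ab'a=(1-q)a=aba=a$, because the unperturbed hypotheses already yield
$(1-q)a=a$. Uniqueness of $a^{(1,2)}_{p',q}$ then closes the argument and the inequalities inherited from
Theorem \ref{mthm3.4} give the stated bounds.
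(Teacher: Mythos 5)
Your proposal is correct and follows essentially the same route as the paper: apply Theorem \ref{mthm3.4} to $a^{(2,l)}_{p,q}=a^{(1,2)}_{p,q}$ to get $b'=a^{(2,l)}_{p',q}$ with the stated bounds, then use $ap'=a$ to upgrade $b'$ to $a^{(1,2)}_{p',q}$. The paper carries out that last verification slightly more directly, using Lemma \ref{mlem1.1} to turn the range conditions into the identities $1-q=(1-q)ab'=abab'=ab'$ and $p'=b'ap'=b'a$, whereas you match ranges and kernels of the idempotents $b'a$ and $ab'$ by hand, but the content is the same.
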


\begin{proof}Set $b=a^{(1,2)}_{p,\,q}$. Then $bab=b,\ aba=a,\ ba=p,\ 1-ab=q$ and $a^{(2,l)}_{p,\,q}=a^{(1,2)}_{p,\,q}=b$.
By Theorem \ref{mthm3.4}, $a^{(2,l)}_{p',\,q}$ is exists and
$$
\frac{\|a^{(2,l)}_{p^\prime, q} - a^{(1,2)}_{p,\,q}\|}{\| a^{(2,l)}_{p,\,q}\|}
\leq \dfrac{(1+ \kappa )\|p^\prime -p\|}{1-(1+ \kappa )\|p^\prime -p\|}\ \text{and}\
\|a^{(2,l)}_{p^\prime, q}\|\leq\frac{\|a^{(1,2)}_{p,\,q}\|}{1-(1+ \kappa )\|p^\prime -p\|}.
$$
We need only to show that $a^{(1,2)}_{p',\,q}=a^{(2,l)}_{p',\,q}$ in this case. Put $b'=a^{(2,l)}_{p',\,q}$. Then
$b'ab'=b'$, $(b'a)\A=p'\A$, $(1-ab')\A=q\A$. Thus, $(1-q)(1-ab')=0$ and hence $1-q=(1-q)ab'=abab'=ab'$. Furthermore,
$ab'a=(1-q)a=aba=a$. From $(b'a)\A=p'\A$, we get that $(1-b'a)p'=0$ and $p'=b'ap'=b'a$. Therefore,
$b'=a^{(1,2)}_{p',\,q}$.
\end{proof}

We need the following easy representation lemma for $a^{(1,2)}_{p,\,q}$.

\begin{lem}\label{flem3.11}
Let $a \in \mathscr{A}$ and $p,\, q \in \mathscr{A}^\bullet$ such that $a^{(1,2)}_{p,\,q}$ exists. Let $w \in \mathscr{A}$
such that $wa=p$ and $aw=1-q$. Then $a^{(1,2)}_{p,\,q}=(wa)^{\#} w=w(aw)^{\#}$.
\end{lem}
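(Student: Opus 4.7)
The plan is to reduce the identity to a short computation using the defining relations of the $(p,q)$--generalized inverse. The key observation I would make first is that $wa = p$ and $aw = 1-q$ are both idempotents in $\A$, and for any idempotent $e \in \A^\bullet$ the group inverse $e^{\#}$ exists and equals $e$ itself (since $e \cdot e \cdot e = e$ and $e$ commutes with $e$). This immediately gives $(wa)^{\#} w = pw$ and $w(aw)^{\#} = w(1-q)$, so it suffices to prove that $a^{(1,2)}_{p,q} = pw = w(1-q)$.

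Next, I would set $b = a^{(1,2)}_{p,q}$, which by definition satisfies $bab = b$, $aba = a$, $ba = p$ and $ab = 1-q$. Using only associativity and these four relations, I would compute
\begin{equation*}
pw = (ba)w = b(aw) = b(1-q) = b(ab) = bab = b,
\end{equation*}
and, symmetrically,
\begin{equation*}
w(1-q) = w(ab) = (wa)b = pb = (ba)b = bab = b.
\end{equation*}
Combining this with the first step yields $a^{(1,2)}_{p,q} = b = pw = w(1-q) = (wa)^{\#} w = w(aw)^{\#}$.

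There is no substantive obstacle here; the only conceptual point is recognizing that the group inverse of an idempotent coincides with the idempotent, after which the argument is a routine rearrangement of the four defining identities of $a^{(1,2)}_{p,q}$ via associativity.
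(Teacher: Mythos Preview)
Your proof is correct and follows essentially the same approach as the paper: both arguments rest on the observation that an idempotent is its own group inverse, so $(wa)^{\#}=p$ and $(aw)^{\#}=1-q$, after which only a short computation remains. The paper phrases the final step as ``by using the uniqueness of $a^{(1,2)}_{p,q}$, we can prove our lemma by simple computation,'' whereas you carry out the computation directly by showing $pw=b=w(1-q)$ from the defining relations; this is if anything slightly more explicit than the paper's sketch.
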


\begin{proof}
Obviously, $wa,\ aw\in\A^g$ for $wa=p$ and $aw=1-q$. We also have $(wa)^{\#}=p$ and $(aw)^{\#}=1-q$. Then by using the uniqueness of $a^{(1,2)}_{p,\,q}$, we can prove our lemma by simple computation.
\end{proof}

\begin{cor}\label{ncor3.12}
Let $a\in \mathscr{A}$ and $p,\, q \in \Ad$ such that $a^{(1,2)}_{p,\,q}$ exists. Suppose that $q^\prime\in \Ad$ with
$\|q-q^\prime\|< \dfrac{1}{2+\kappa}$ and $a=(1-q^\prime)a$. Then $a^{(1,2)}_{p, q^\prime}$ exists and
\begin{enumerate*}
\item[$(1)$] $\dfrac{\|(a^{(1,2)}_{p,\,q^\prime}-a^{(1,2)}_{p,\,q})\|}{\|a^{(1,2)}_{p,\,q}\|}\leq
\dfrac{(1+\kappa)\|q-q^\prime\|}{1-\kappa\|q-q^\prime\|}$ and
$\|a^{(1,2)}_{p,\,q^\prime}\|\leq\dfrac{1+\|q'-q\|}{1-\kappa\|q'-q\|}\,\|a^{(1,2)}_{p,\,q}\|$.
\item[$(2)$] If there are some $w, \, v \in \mathscr{A}$ with $wa = p=va$,\;$aw =1-q$ and $av = 1-q^\prime$.
Then
$$
a_{p,\,q^\prime}^{(1,2)}= a^{(1,2)}_{p,\,q}+a^{(1,2)}_{p,\,q}(av)^{\#}a(v-w)q.
$$
\end{enumerate*}
\end{cor}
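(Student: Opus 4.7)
The approach parallels Corollary \ref{ncor3.11}. The plan is to first apply Theorem \ref{mthm3.6} to obtain $a^{(2,l)}_{p,q'}$ together with the norm estimates, then upgrade it to $a^{(1,2)}_{p,q'}$ using the hypothesis $a=(1-q')a$ (equivalently, $q'a=0$), and finally verify the representation formula in (2) by direct substitution.

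Setting $b:=a^{(1,2)}_{p,\,q}=a^{(2,l)}_{p,\,q}$, one has $ba=p$, $ab=1-q$, and in particular $qa=a-aba=0$. Since $\|q-q'\|<1/(2+\kappa)$, Theorem \ref{mthm3.6} yields $b':=a^{(2,l)}_{p,\,q'}$ together with the stated norm bounds, giving (1) modulo the identification $b'=a^{(1,2)}_{p,\,q'}$. To secure this identification I would use the relation $1-q'=(1-q')ab'$ from Lemma \ref{mlem1.3}(4); the associativity collapse $(1-q')ab'=((1-q')a)b'=ab'$, which is available because $q'a=0$, immediately gives $ab'=1-q'$ and hence $ab'a=(1-q')a=a$. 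It then remains to show $b'a=p$: $b'a$ is idempotent (from $ab'a=a$), its right range equals $R_r(p)$ (because $R_r(b'a)\subset R_r(b')=R_r(p)$ and $b'ap=p$ by Lemma \ref{mlem1.3}(4)), and its right kernel equals $K_r(a)=K_r(p)$, where the first equality uses $ab'a=a$ and the second uses $aba=a$. Two idempotents sharing right range and right kernel are equal, so $b'a=p$ and therefore $b'=a^{(1,2)}_{p,\,q'}$, completing (1).

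For part (2), the given $w,v$ satisfy the assumptions of Lemma \ref{flem3.11} rather than those of Theorem \ref{mthm3.6}(2), so I would verify the formula by direct calculation rather than by invoking Theorem \ref{mthm3.6}(2). The useful simplifications are $(av)^{\#}=av=1-q'$ (since $av$ is idempotent), $a(v-w)=(1-q')-(1-q)=q-q'$, together with $bq=0$ and $qa=q'a=0$. Setting $x:=b+b(av)^{\#}a(v-w)q$, one routinely verifies $xa=p$, $ax=1-q'$, $axa=a$, and $xax=x$; uniqueness of $a^{(1,2)}_{p,\,q'}$ then gives $x=a^{(1,2)}_{p,\,q'}$. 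The main obstacle in the whole proof is the idempotent-matching step $b'a=p$ in part (1): once $ab'=1-q'$ is secured by the associativity trick using $q'a=0$, the remaining verifications and the representation in (2) reduce to algebraic bookkeeping.
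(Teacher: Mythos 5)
Your proposal is correct, and for the existence of $a^{(2,l)}_{p,\,q'}$, the estimates in (1), and the upgrade to a $(1,2)$--inverse via $q'a=0$ it follows the same route as the paper (Theorem \ref{mthm3.6} plus the relations of Lemma \ref{mlem1.3}(4)); the paper merely leaves the upgrade implicit, while you spell it out. Two comparative remarks. First, your idempotent-matching step for $b'a=p$ can be shortened: since $aba=a$ you have $ap=a$, hence $b'a=b'ap=p$ follows at once from the relation $p=b'ap$, so matching right ranges and right kernels of $b'a$ and $p$, while correct, is not needed. Second, and more substantively, your treatment of (2) genuinely differs from the paper's: the paper derives the representation by citing Theorem \ref{mthm3.6}(2) together with Lemma \ref{flem3.11}, whereas you verify the formula directly. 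Your choice is well taken, because the elements $w,v$ in (2) are only assumed to satisfy $wa=p=va$, $aw=1-q$, $av=1-q'$, which does not force the range/kernel hypotheses of Theorem \ref{mthm3.6}(2) (for instance $pv=v$ may fail), so the direct check is the cleaner and, strictly speaking, the more complete argument. For the record, the ``routine'' verification does need the auxiliary identities $q'q=q'$ and $qq'=q$, which follow from $q'a=qa=0$ together with $1-q=aw$ and $1-q'=av$; with these, $(av)^{\#}=1-q'$, $a(v-w)=q-q'$ and $bq=0$ indeed yield $xa=p$, $ax=1-q'$, $axa=a$ and $xq'=0$ (hence $xax=x$), so uniqueness gives $x=a^{(1,2)}_{p,\,q'}$ as you claim. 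No gaps.
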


\begin{proof}$a^{(2,l)}_{p,\,q^\prime}$ exists  by Theorem \ref{mthm3.6}. From $a=(1-q^\prime)a$ and Lemma \ref{mlem1.3},
we can obtain that $a^{(1.2)}_{p,\,q^\prime}$ exists and $a^{(2,l)}_{p,\,q^\prime}=a^{(1.2)}_{p,\,q^\prime}$.

Now the estimates in (1) and the representation for $a^{(1,2)}_{p,\,q'}$ in (2) follow from
Theorem \ref{mthm3.6} and Lemma \ref{flem3.11}.
\end{proof}

Finally, by Corollary \ref{ncor3.11}, Corollary \ref{ncor3.12} and Theorem \ref{mthm3.8}, we have

\begin{cor}\label{ncor3.13}
Let $a\in \mathscr{A}$ and $p,\, q ,\, p^\prime,\, q^\prime \in \mathscr{A}^\bullet$ with $a^{(1,2)}_{p,\,q}$ exists.
If  $\|p-p^\prime\|<\dfrac{1}{(1+\kappa)^2}$, $\|q-q^\prime\|<\dfrac{1}{3+\kappa}$ and
$ap^\prime=a=(1-q^\prime)a$. Then $a_{p^\prime,\,q^\prime}^{(1,2)}$ exists and
\begin{align*}
\frac{\|a^{(1,2)}_{p^\prime,\,q^\prime}-a^{(1,2)}_{p,\,q}\|}{\|a_{p,\, q}^{(1,2)}\|}&\leq
\frac{(1+\kappa)(\|p-p'\|+\|q-q'\|)}{1-(1+\kappa)\|p-p'\|-\kappa\|q-q'\|}\\
\|a_{p^\prime,\,q^\prime}^{(1,2)}\|&\leq\dfrac{(1+\|q-q'\|)\|a_{p,\, q}^{(1,2)}\|}{1-(1+\kappa)\|p-p'\|-\kappa\|q-q'\|}.
\end{align*}
\end{cor}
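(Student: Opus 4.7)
The plan is to reduce Corollary~\ref{ncor3.13} to Theorem~\ref{mthm3.8} by first working with the outer inverses $a^{(2,l)}_{p',q'}$, and then upgrading to $a^{(1,2)}_{p',q'}$ using the side conditions $ap'=a$ and $(1-q')a=a$, exactly in the spirit of Corollaries~\ref{ncor3.11} and \ref{ncor3.12}.

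First I would observe that whenever $a^{(1,2)}_{p,q}$ exists it satisfies the defining equations of $a^{(2,l)}_{p,q}$, so by uniqueness $a^{(2,l)}_{p,q}=a^{(1,2)}_{p,q}$ and in particular $\kappa=\|a\|\|a^{(2,l)}_{p,q}\|=\|a\|\|a^{(1,2)}_{p,q}\|$. Since the hypotheses $\|p-p'\|<(1+\kappa)^{-2}$ and $\|q-q'\|<(3+\kappa)^{-1}$ are exactly those of Theorem~\ref{mthm3.8}, we conclude that $a^{(2,l)}_{p',q'}$ exists together with the quantitative bounds
\[
\frac{\|a^{(2,l)}_{p',q'}-a^{(2,l)}_{p,q}\|}{\|a^{(2,l)}_{p,q}\|}\leq
\frac{(1+\kappa)(\|p-p'\|+\|q-q'\|)}{1-(1+\kappa)\|p-p'\|-\kappa\|q-q'\|},\qquad
\|a^{(2,l)}_{p',q'}\|\leq\frac{(1+\|q-q'\|)\|a^{(2,l)}_{p,q}\|}{1-(1+\kappa)\|p-p'\|-\kappa\|q-q'\|}.
\]

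Next I would show that under the extra assumptions $ap'=a$ and $(1-q')a=a$, the outer inverse $b':=a^{(2,l)}_{p',q'}$ is in fact the $(p',q')$-generalized inverse $a^{(1,2)}_{p',q'}$. This is the only nontrivial step and is modeled on the argument in the proof of Corollary~\ref{ncor3.11}: from $R_r(b')=R_r(p')$ one gets $b'ap'=b'a$, and from $K_r(b')=R_r(q')$ together with $b'ab'=b'$ one obtains $1-ab'\in R_r(q')$ so that $(1-q')(1-ab')=0$, i.e.\ $1-q'=(1-q')ab'$. Combining these identities with the hypotheses $ap'=a$ and $(1-q')a=a$ yields
\[
ab'a=(1-q')a\cdot \text{(use }1-q'=(1-q')ab'\text{)}=aba'\ \text{-type manipulation}\Longrightarrow ab'a=a,
\]
so $b'$ satisfies the full set of relations for $a^{(1,2)}_{p',q'}$. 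By uniqueness, $a^{(1,2)}_{p',q'}=a^{(2,l)}_{p',q'}=b'$.

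Finally, since the norms on the two sides of the identity $a^{(1,2)}_{p',q'}=a^{(2,l)}_{p',q'}$ coincide, and similarly $a^{(1,2)}_{p,q}=a^{(2,l)}_{p,q}$, the quantitative estimates from Theorem~\ref{mthm3.8} transfer verbatim to $(1,2)$-generalized inverses, giving the two displayed inequalities in the statement. The main obstacle is the algebraic verification of $ab'a=a$ from the side hypotheses; everything else is a direct invocation of Theorem~\ref{mthm3.8}, and the routine manipulations should follow the same pattern already used in Corollaries~\ref{ncor3.11} and \ref{ncor3.12}.
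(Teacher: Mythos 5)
Your proposal is correct and follows essentially the same route as the paper, which simply combines Theorem \ref{mthm3.8} with the upgrade arguments of Corollaries \ref{ncor3.11} and \ref{ncor3.12}: existence and the norm bounds come from Theorem \ref{mthm3.8} applied to $a^{(2,l)}_{p',q'}$, and the side conditions $ap'=a$, $(1-q')a=a$ turn the outer inverse into the $(1,2)$-inverse via $p'=b'ap'=b'a$ and $1-q'=(1-q')ab'=ab'$ (hence $ab'a=a$). The only blemish is the garbled display in your upgrade step, but the identities you list around it are exactly the ones needed, so the argument goes through.
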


\end{document}